\documentclass{amsart}
\usepackage{amssymb,latexsym,amsmath,amsthm,enumitem,mathrsfs,geometry}
\usepackage{fullpage}
\usepackage{fancyhdr}
\usepackage{color}
\usepackage{stmaryrd}
\usepackage{mathrsfs}
\usepackage{hyperref}
\usepackage{lineno}
\usepackage{diagbox}
\usepackage{array}
\usepackage{tikz}
\usetikzlibrary{arrows}
\usetikzlibrary{positioning}
\usepackage{float}



\newtheorem{thm}{Theorem}[section]
\newtheorem*{thm*}{Theorem}

\newtheorem{lemma}[thm]{Lemma}

\newcommand{\beq}{\begin{equation}}
\newcommand{\eeq}{\end{equation}}




\setlength{\headsep}{8pt}
\setlength{\footskip}{20pt}

\newtheorem{theorem}{Theorem}
\newtheorem*{WMC}{Weak Mertens Conjecture}
\newtheorem*{GHC}{Gonek-Hejhal Conjecture}

\newtheorem*{proposition}{Proposition}

\setcounter{lemmaletter}{1}
	
\definecolor{pink}{rgb}{1,.2,.6}
\definecolor{orange}{rgb}{0.7,0.3,0}
\definecolor{blue}{rgb}{.2,.6,.75}
\definecolor{green}{rgb}{.4,.7,.4}
\definecolor{purple}{RGB}{127,0,255}



\begin{document}

\numberwithin{equation}{section}

\title{A singular series average and the zeros of the Riemann zeta-function}

\author[Goldston]{D. A. Goldston}
\address{Department of Mathematics and Statistics, San Jose State University}
\email{daniel.goldston@sjsu.edu}

\author[Suriajaya]{Ade Irma Suriajaya}
\address{Faculty of Mathematics, Kyushu University}
\email{adeirmasuriajaya@math.kyushu-u.ac.jp}
\keywords{prime numbers, Riemann zeta-function, singular series}
\subjclass[2010]{11N05, 11M26}
\thanks{$^{*}$ The second author was supported by JSPS KAKENHI Grant Number 18K13400.}

\date{\today}

\begin{abstract}
We show that the Riesz mean of the singular series in the Goldbach and the Hardy-Littlewood prime-pair conjectures has an asymptotic formula with an error term that can be expressed as an explicit formula that depends on the zeros of the Riemann zeta-function. Unconditionally this error term can be shown to oscillate, while conditionally it can be shown to oscillate between sharp bounds. 
\end{abstract}

\maketitle

\section{Introduction } 

This paper is concerned with averages of the arithmetic function
\begin{equation}\label{SingProd}
\mathfrak{S}(k) = \begin{cases}
{\displaystyle 2 C_{2} \prod_{\substack{ p \mid k \\ p > 2}} \!\left(\frac{p - 1}{p - 2}\right)} & \mbox{if $k$ is even, $k\neq 0$}, \\
 0 & \mbox{if $k$ is odd}\end{cases}
\end{equation}
where
\begin{equation}\label{eq1.3}
C_{2}
 = \prod_{p > 2}\! \left( 1 - \frac{1}{(p - 1)^{2}}\right)
 = 0.66016\ldots.
\end{equation}
The function $\mathfrak{S}(k)$ occurs as the arithmetic factor in conjectured asymptotic formulas for the Goldbach Conjecture and also the Hardy-Littlewood Prime Pair Conjecture and is referred to as the \lq \lq singular series" for these problems, see \cite{HardyLittlewood1919} and \cite{HardyLittlewood1922}. Averages of $\mathfrak{S}(k)$ arise frequently in a variety of problems in analytic number theory. The Ces{\`a}ro mean of $\mathfrak{S}(k)$
\begin{equation} \label{Cesaro} S_1(x) := \sum_{k\le x}(x-k)\mathfrak{S}(k) \end{equation}
appears naturally in evaluating the variance for the number of primes in a short interval, and originated in Hardy and Littlewood's unpublished paper Partitio Numerorum VII, see \cite[Lemma 8] {Rankin1940}. This variance for the number of primes in intervals is equivalent to a pair correlation type conjecture for the zeros of the Riemanm zeta-function \cite{GoldMont}, and this connection partly motivated work on obtaining precise asymptotic formulas for $S_1(x)$, see \cite{MontgomerySound1999}. The best results currently known are due to Vaughan \cite{Vaughan2001}, who proved that 
\begin{equation}\label{SingCAve} S_1(x) =\frac12 x ^2 - \frac12 x \log x + \frac12( 1-\gamma -\log 2\pi) x + E(x) \end{equation}
with
\[ E(x) \ll x^{\frac12} \exp\left( -c \frac{( \log 2x)^{\frac35}}{(\log\log 3x)^{\frac15}} \right), \]
for some positive constant $c$, and assuming the Riemann Hypothesis that 
\[ E(x) \ll x^{\frac{5}{12}+\varepsilon}.\] 
In a recent paper \cite{GS2020} we proved that
\begin{equation} E(x) = \Omega_{\pm }(x^{\frac14})\end{equation}
so that $E(x)$ cannot be too small and oscillates. 
Vaughan's method shows that the size of the error term $E(x)$ depends on a zero-free region of the Riemann zeta-function, and that there is no main term and therefore no barrier at $x^{\frac12}$. Our goal in this paper is to make this dependence between the complex zeros $\rho = \beta +i\gamma$ of the Riemann zeta-function and the error in the singular series average more explicit. The problem that is encountered in doing this for $S_1(x)$ is that the Cesaro mean is not smooth enough for absolute convergence in the contour integrals one encounters. Inspired by recent work on a related problem \cite{GZHN2017}, \cite{GeLiu2017} concerning the square of the singular series, we can sidestep this problem by using smoother weights. We found while writing this paper that this idea has already been applied to other arithmetic functions, and some results of the same type as ours have been obtained by similar methods for these functions. For $\frac{1}{\phi(k)}$ and $\frac{k}{\phi(k)}$, see \cite{GV1996}, \cite{SanSingh2013}, \cite{SanSingh2014}, and \cite{InoueKiuchi2017}, for $\frac{k}{\psi(k)}$, where $\psi$ is the Dedekind totient function, see \cite{InabaInoue2017} and \cite{InoueKiuchi2017}, and for the M\"obius function see \cite{Inoue2019} and \cite{SahaSan2019}.

We define, for $m\ge 1$,
\begin{equation} S_m(x) := \sum_{k\le x}\left(x-k\right)^m\mathfrak{S}(k). \end{equation}
This is the $m$-th Riesz mean of the singular series, although we chose to not normalize it; when $m=1$ this is the Ces{\`a}ro mean. We generalize \eqref{SingCAve} by defining $E_m(x)$, for $m\ge 1$ by 
\begin{equation} \label{formula1}
S_m(x) =\frac{1}{m+1}x^{m+1} -\frac{1}{2}x^m\left( \log x -H_m +\gamma +\log{2\pi}\right) + E_m(x), 
\end{equation}
where $H_m= 1+\frac12+\frac13+\cdots + \frac1m$ is the $m$-th harmonic number,
and we will write $E_1(x)= E(x)$ in agreement with \eqref{SingCAve}. 

To describe our main result, we note that if $\rho= \beta +i\gamma$ is a complex zero of $\zeta(s)$ then $\overline{\rho} = \beta-i\gamma$ is also a zero, so that the zeros above and below the real axis are reflections of each other. The Riemann Hypothesis is that all these complex zeros lie on the vertical line with $\beta =\frac12$, so that $\rho = \frac12 +i\gamma$. We now define
\begin{equation} \label{a_rho} a(\rho) = a(\rho,m) := \frac{ 2C_2m!\zeta(\frac{\rho}{2}-1)\zeta(\frac{\rho}{2})\mathcal{G}(\frac{\rho}{2}-1)}{(2^{\frac{\rho}{2}}+1)\zeta'(\rho) (\frac{\rho}{2}-1)(\frac{\rho}{2})(\frac{\rho}{2}+1)\cdots (\frac{\rho}{2}+m-1)},
\end{equation}
if $\rho$ is a simple zero of the Riemann zeta-function. Here
\begin{equation} \label{Gcal} \mathcal{G}(s) = \prod_{p>2}\left(1+\frac{2}{(p-2)(p^{s+1}+1)} \right). \end{equation}
If $\rho$ is a multiple zero of multiplicity $m_\rho =\ell$, then $a(\rho)$ is a polynomial of degree $\ell-1$ in powers of $\log x$ 
\begin{equation*} a(\rho) = a(\rho,m,\ell,x) = \sum_{j=0}^{\ell-1} A_j(\rho,m)(\log x)^j, \end{equation*}
defined in the next section.
When $m\ge 2$ we prove the following unconditional result.
\begin{theorem} \label{thm1}
For fixed $m\ge 2$, and any $\epsilon >0$ and $x$ sufficiently large, we have that there exists a number $U$, $x^5\le U\le 2x^5$ for which
\begin{equation} \label{Em(x)} E_m(x) = x^{m-1} \sum_{|\gamma|\le U} a(\rho) x^{\frac{\rho}{2}} + O(x^{m-1+\epsilon}). 
\end{equation}
\end{theorem}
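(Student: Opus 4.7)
The plan is a standard Mellin-inversion / residue argument. First I would compute the Dirichlet series
$$F(s)=\sum_{k=1}^{\infty}\frac{\mathfrak{S}(k)}{k^s}.$$
Since $\mathfrak{S}$ vanishes on odd $k$ and depends only on the odd part of $k$, splitting $k=2^a m$ with $a\ge 1$ and $m$ odd yields
$$F(s)=2C_2\cdot 2^{-s}\zeta(s)\prod_{p>2}\!\left(1+\frac{1}{(p-2)p^s}\right).$$
A prime-by-prime comparison with the Euler product defining $\mathcal{G}$ shows that the trailing product equals $\mathcal{G}(s)\,\zeta(s+1)/\bigl((1+2^{-s-1})\zeta(2s+2)\bigr)$, producing the closed form
$$F(s)=\frac{4C_2\,\zeta(s)\,\zeta(s+1)\,\mathcal{G}(s)}{(2^{s+1}+1)\,\zeta(2s+2)}.$$
The crucial feature is the factor $1/\zeta(2s+2)$, which transplants each non-trivial zero $\rho$ of $\zeta$ into a pole of $F$ at $s=\rho/2-1$, lying in the strip $-1<\operatorname{Re}(s)<-1/2$.

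Next I would insert this closed form into the Riesz--Perron identity
$$S_m(x)=\frac{m!}{2\pi i}\int_{c-i\infty}^{c+i\infty} F(s)\,\frac{x^{s+m}}{s(s+1)(s+2)\cdots(s+m)}\,ds,\qquad c>1,$$
whose absolute convergence for $m\ge 2$ is ensured by the $|s|^{-(m+1)}$ decay of the Perron kernel. After truncating at height $U\in[x^5,2x^5]$, I shift the contour leftward to $\operatorname{Re}(s)=-1+\delta$ with $\delta\asymp 1/\log U$, chosen so that $2\delta$ lies inside the ``lower'' zero-free region of $\zeta$ up to height $2U$; by the classical bound $\beta\gg 1/\log U$ applied to $1-\bar\rho$, this line lies to the left of every $\rho/2-1$ with $|\gamma|\le U$. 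Inside the resulting rectangle the integrand has a simple pole at $s=1$ from $\zeta(s)$, a double pole at $s=0$ from $\zeta(s+1)$ and $1/s$, and a pole at each $s=\rho/2-1$ with $|\gamma|\le U$ of order equal to the multiplicity of $\rho$. A routine Laurent computation shows that the residues at $s=1$ and $s=0$ reproduce the two explicit main terms $x^{m+1}/(m+1)$ and $-\tfrac12 x^m(\log x-H_m+\gamma+\log 2\pi)$ of \eqref{formula1}. At a simple zero $\rho$ the residue at $s=\rho/2-1$ equals $(2\zeta'(\rho))^{-1}$ times the remaining integrand, which after multiplication by $m!$ is exactly $a(\rho)\,x^{m-1+\rho/2}$; at a zero of multiplicity $\ell$, the higher-order Laurent expansion of $1/\zeta(2s+2)$ combined with $x^{s+m}=x^{m-1+\rho/2}\sum_j (\log x)^j(s-(\rho/2-1))^j/j!$ produces the polynomial $a(\rho,m,\ell,x)$ of degree $\ell-1$ in $\log x$.

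The remaining step, and the main technical obstacle, is to show that the shifted vertical line and the two horizontal segments at height $\pm U$ together contribute only $O(x^{m-1+\varepsilon})$. On the shifted line $\operatorname{Re}(s)=-1+\delta$ the integrand has size at most $x^{m-1+\delta}$ times a convergent vertical integral, where the key bound $1/|\zeta(2s+2)|\ll\log^{O(1)}U$ follows from being inside the lower zero-free region, while the remaining factors are controlled by standard convexity estimates for $\zeta$ and the boundedness of $\mathcal{G}$. For the horizontal segments, I exploit $N(T+1)-N(T)\ll\log T$ to pick $U\in[x^5,2x^5]$ at least $1/\log U$ away from every ordinate of a zero of $\zeta$; this yields the matching bound $1/|\zeta(2s+2)|\ll\log^{O(1)}U$ uniformly for $\operatorname{Re}(s)\in[-1+\delta,c]$. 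Combined with convexity bounds for $\zeta(s)\zeta(s+1)$ and the $U^{-(m+1)}$ decay from the Perron kernel, the horizontal contribution is $O\bigl(x^{c+m}\log^{O(1)}(U)/U^{m+1}\bigr)$, which for $U\asymp x^5$ and $m\ge 2$ is comfortably $O(x^{m-1+\varepsilon})$. The simultaneous uniform control of $1/\zeta(2s+2)$ on both the vertical and horizontal parts of the contour is the main difficulty, but both uses reduce to the same zero-free-region inputs. Summing all the residues and the error bound then yields the claimed formula for $E_m(x)$.
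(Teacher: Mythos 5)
Your overall strategy --- the closed form for $F(s)$, the Riesz--Perron integral, truncation at height $\asymp x^5$, the shift to $\operatorname{Re}(s)=-1+\delta$ with $\delta\asymp 1/\log U$ (justified by reflecting the zero-free region), and the residue computations at $s=1$, $s=0$ and $s=\rho/2-1$ --- is exactly the paper's, and those parts are sound. The vertical-line estimate also works: there $2s+2$ has real part $2\delta$, and the functional equation transfers the bound $1/\zeta\ll\log$ from the zero-free region near $\sigma=1$, so your $\log^{O(1)}U$ claim is correct (in fact one gets an extra factor $(|t|+3)^{2\delta-1/2}$, which the paper uses, but either suffices for $m\ge 2$).

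The genuine gap is in your treatment of the horizontal segments. From the condition that $U$ is at distance $\gg 1/\log U$ from every ordinate $\gamma$, the standard partial-fraction expansion gives only $\frac{\zeta'}{\zeta}(\sigma+iT)\ll\log^2 T$, hence after integrating in $\sigma$ only $1/|\zeta(\sigma+iT)|\ll\exp(C\log^2 T)$ uniformly in the critical strip (this is Titchmarsh, Theorem 9.7); it does \emph{not} give $1/|\zeta(2s+2)|\ll\log^{O(1)}U$ on the part of the segment where $\operatorname{Re}(2s+2)$ lies in $(0,1)$, i.e.\ $\operatorname{Re}(s)\in(-1,-\tfrac12)$, which is precisely where the poles live. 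Since $\exp(C\log^2 T)=T^{C\log T}$ exceeds every fixed power of $T$, the resulting horizontal contribution $\ll x^{m+2}\exp(C\log^2 U)/U^{m-1}$ is not $O(x^{m-1+\epsilon})$, and the argument collapses. The paper gets around this with a nontrivial input: the Ramachandra--Sankaranarayanan theorem, which produces unconditionally a height $T^*\in[T,T+T^{1/3}]$ with $\max_{1/2\le\sigma\le 2}|\zeta(\sigma+iT^*)|^{-1}\le\exp(C(\log\log T)^2)\ll T^\epsilon$, extended to $-B\le\sigma\le B$ by the functional equation; you need this (or RH) to make the horizontal segments admissible. A secondary inaccuracy: $\mathcal{G}(s)$ is \emph{not} bounded on $\operatorname{Re}(s)=-1+\delta$, since factors with $p^{it}$ near $-1$ can be of size $\log U/\log p$; the paper proves separately that $\mathcal{G}(s)\ll\exp(B\log U/\log\log U)\ll U^\epsilon$ there, which is harmless but must be checked.
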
 
The sum over zeros in Theorem 1 is difficult to estimate because of our lack of knowledge of how often $\zeta'(\rho)$ can be small, and even the Riemann Hypothesis does not directly help estimate this type of sum. However, the method used to prove Theorem 1 comes within an extra factor of $x^\epsilon$ of proving the correct upper bound for $E_m(x)$.

\begin{theorem} \label{thm2} Assume the Riemann Hypothesis. For fixed $m\ge 2$, and any $\epsilon >0$ and $x$ sufficiently large, we have
\begin{equation} \label{formula2}
S_m(x) =\frac{1}{m+1}x^{m+1} -\frac{1}{2}x^m\left( \log x -H_m +\gamma +\log{2\pi}\right) + O(x^{m-\frac34+\epsilon}),
\end{equation}
and therefore
\begin{equation} \label{formula22}
E_m(x) \ll x^{m-\frac34+\epsilon}.
\end{equation}
\end{theorem}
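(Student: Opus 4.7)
The plan is to revisit the Perron/Mellin contour argument that underlies Theorem~\ref{thm1}, but under the Riemann Hypothesis I can place the final contour just to the \emph{right} of the line of zero-induced poles rather than to the left of them, thereby avoiding the explicit sum over $\rho$ entirely. Concretely, I start from
\[
S_m(x) \;=\; \frac{m!}{2\pi i}\int_{(c)} D(s)\,\frac{x^{s+m}}{s(s+1)\cdots(s+m)}\,ds, \qquad c>1,
\]
where $D(s)=\sum_{k\ge 1}\mathfrak{S}(k)k^{-s}$ is the Dirichlet series for the singular series, which admits the factorisation
\[
D(s) \;=\; \frac{4C_2\,\zeta(s)\zeta(s+1)\mathcal{G}(s)}{(2^{s+1}+1)\,\zeta(2s+2)}
\]
obtained by rearranging the Euler product of $\mathfrak{S}$. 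In the strip $-1<\mathrm{Re}(s)<c$, the only poles of the integrand are a simple pole at $s=1$ (from $\zeta(s)$), a double pole at $s=0$ (from $\zeta(s+1)$ and $1/s$), and simple poles at $s=\rho/2-1$ coming from $1/\zeta(2s+2)$. Under RH, this last family lies precisely on the vertical line $\mathrm{Re}(s)=-3/4$.

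I would therefore shift the contour to $\mathrm{Re}(s)=-3/4+\epsilon$, staying strictly to the right of all zero-induced poles, so that the residue theorem produces $S_m(x)=R_1+R_0+I(x)$. The residue $R_1$ at $s=1$ uses $\mathrm{Res}_{s=1}\zeta(s)=1$ together with the identity $4C_2\zeta(2)\mathcal{G}(1)=5\zeta(4)$, which one verifies by simplifying the $p$-factor of $\mathcal{G}(1)$ to $p(p-1)^2/[(p-2)(p^2+1)]$ and cancelling it against the $p$-factor of $C_2$; this produces $R_1=x^{m+1}/(m+1)$. The double pole at $s=0$ is handled by expanding every factor to order $s$: one uses $\zeta(1+s)=1/s+\gamma+O(s)$, $\zeta(0)=-1/2$, $\zeta'(0)/\zeta(0)=\log(2\pi)$, the expansion $m!/[(s+1)\cdots(s+m)]=1-sH_m+O(s^2)$, and the identity $\mathcal{G}'(0)/\mathcal{G}(0)=2\zeta'(2)/\zeta(2)+(2\log 2)/3$ (obtained by differentiating each Euler factor of $\mathcal{G}$ and recognising $\zeta'(2)/\zeta(2)=-\sum_p \log p/(p^2-1)$). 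Collecting the $1/s$ coefficient yields $R_0=-\tfrac12 x^m(\log x-H_m+\gamma+\log 2\pi)$, matching the main term of \eqref{formula1}.

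For the remaining integral $I(x)$ on $\mathrm{Re}(s)=-3/4+\epsilon$, I would combine the standard bounds implied by RH: $|\zeta(-3/4+\epsilon+it)|\ll |t|^{5/4-\epsilon}$ from the functional equation, $|\zeta(1/4+\epsilon+it)|\ll |t|^{1/4-\epsilon+\epsilon'}$ from the Lindel\"of Hypothesis, $|\mathcal{G}(s)|\ll 1$ by absolute convergence of the Euler product in $\mathrm{Re}(s)>-1$, $|2^{s+1}+1|^{-1}\ll 1$, and Littlewood's RH bound $|1/\zeta(2s+2)|\ll |t|^{\epsilon'}$, which applies since $\mathrm{Re}(2s+2)=\tfrac12+2\epsilon>\tfrac12$. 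Multiplying these gives $|D(s)|\ll |t|^{3/2-2\epsilon+O(\epsilon')}$, and dividing by $|s(s+1)\cdots(s+m)|\asymp |t|^{m+1}$ produces an integrand of size $|t|^{1/2-m+O(\epsilon+\epsilon')}$. For $m\ge 2$ this is absolutely integrable, yielding $I(x)\ll x^{m-3/4+\epsilon}$, which is precisely \eqref{formula22}. The main obstacle I anticipate is the uniform control of $1/|\zeta(2s+2)|$ on a line arbitrarily close to the critical strip; the hypothesis $m\ge 2$ is forced at exactly this step, since when $m=1$ the integrand decays only like $|t|^{-1/2+\epsilon}$ and is not absolutely integrable.
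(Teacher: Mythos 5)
Your proposal is correct and follows essentially the same route as the paper: shift the contour to a vertical line just to the right of $\mathrm{Re}(s)=-\tfrac34$ (the paper uses a fixed $b_1$ with $-\tfrac34<b_1<-\tfrac12$, which is the same device as your $-\tfrac34+\epsilon$), collect only the residues at $s=1$ and $s=0$, and bound the shifted integral via the functional equation together with the RH/Lindel\"of bounds on $\zeta$ and $1/\zeta(2s+2)$, with the resulting integrand size $|t|^{\frac12-m+O(\epsilon)}$ forcing $m\ge 2$ exactly as in the paper. The only step you leave implicit is the justification of the contour shift on the horizontal segments, which under RH is routine since $1/\zeta(2s+2)\ll (|t|+3)^{\epsilon}$ holds uniformly for $-\tfrac34+\epsilon\le\sigma\le 2$, so their contribution is $O(1)$ once $T\ge x^{5}$, as the paper notes.
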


We can conjecturally deal with the sum in Theorem 1 using conjectures related to the Mertens Hypothesis. Let 
\begin{equation} \label{M(x)} M(x) = \sum_{n\le x} \mu(n). \end{equation} 

The Mertens Hypothesis is the inequality $|M(n)| < \sqrt{n}$ for integers $n> 1$, but this was disproved in 1985 by Odlyzko and te Riele \cite{OdlyzkoteRiele}. The conjecture $M(x) \ll \sqrt{x}$ is probably false as well because Ingham \cite{Ingham1942} proved this is false assuming the reasonable conjecture that the imaginary parts of the zeros of the Riemann zeta-function are linearly independent over the integers. On the other hand, it is generally believed that the following conjecture is true.
\begin{WMC} We have
\begin{equation} \int_1^T\left( \frac{M(x)}{x}\right)^2\, dx \ll \log T. \end{equation}
\end{WMC}

It is not hard to prove that the Weak Mertens Conjecture implies the Riemann Hypothesis and that all the zeros are simple zeros, and that
\begin{equation}\label{WMCsum} \sum_{\rho} \frac{1}{|\rho\zeta'(\rho)|^2} \ll 1,\end{equation}
see \cite[14.29]{Titchmarsh}.
Ng \cite{Ng2004} found evidence that a stronger form of the Weak Mertens Conjecture is probably true. For this he made use of the following conjecture of Gonek \cite{Gonek1989} and also Hejhal \cite{Hejhal1989}.
\begin{GHC} We have
\begin{equation} \sum_{0<\gamma \le T} \frac{1}{|\zeta'(\rho)|^2} \ll T.\end{equation} 
\end{GHC}
Ng proved that assuming the Riemann Hypothesis, all zeros are simple, and the Gonek-Hejhal Conjecture, which implicitly implies all zeros are simple,
then the Weak Mertens Conjecture can be strengthened to the asymptotic formula
\begin{equation} \int_1^T\left( \frac{M(x)}{x}\right)^2\, dx \sim \sum_{\rho} \frac{2}{|\rho\zeta'(\rho)|^2}\log T . \end{equation}
For more recent results on this topic, see \cite{Inoue2019} and \cite{SahaSan2019}. 

\begin{theorem} \label{thm3} Assume the Riemann Hypothesis and the simplicity of zeros, so that $\rho=\frac12 +i\gamma$ is simple. If in addition the Gonek-Hejhal Conjecture is true, then for $m\ge 2$ and any $\epsilon >0$ and $x$ sufficiently large, we have
\begin{equation} \label{EmExplicit} E_m(x) = x^{m-\frac34} \sum_{\gamma} \frac{ 2C_2m!\zeta(\frac{\rho}{2}-1)\zeta(\frac{\rho}{2})\mathcal{G}(\frac{\rho}{2}-1)}{(2^{\frac{\rho}{2}}+1)\zeta'(\rho) (\frac{\rho}{2}-1)(\frac{\rho}{2})(\frac{\rho}{2}+1)\cdots (\frac{\rho}{2}+m-1)}x^{i\frac{\gamma}{2}} + O(x^{m-1+\epsilon}), \end{equation}
where the sum over zeros is absolutely convergent. Therefore 
\begin{equation} \label{EmBigO} E_m(x) \ll x^{m-\frac34}, \end{equation}
and more precisely,
letting $c_m := \sum_{\gamma}|a(\rho)|$, where $a(\rho)$ is as defined in \eqref{a_rho}, we have \begin{equation}\label{Embound} |E_m(x)| \le (1+o(1))c_m x^{m-\frac34}. \end{equation}
For $m\ge 3$ the same results hold when we assume the Weak Mertens Conjecture.
\end{theorem}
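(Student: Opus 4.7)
The plan is to upgrade the partial sum in Theorem 1 to an absolutely convergent full sum over zeros. Under RH every zero is $\rho = \tfrac12 + i\gamma$, so $x^{\rho/2} = x^{1/4} x^{i\gamma/2}$ and $x^{m-1} x^{\rho/2} = x^{m-3/4} x^{i\gamma/2}$. Theorem 1 thus becomes
\[ E_m(x) = x^{m-3/4} \sum_{|\gamma|\le U} a(\rho) x^{i\gamma/2} + O(x^{m-1+\epsilon}) \]
for some $U\in[x^5,2x^5]$, and the task reduces to showing (a) the full sum $\sum_\gamma a(\rho) x^{i\gamma/2}$ converges absolutely and (b) the tail beyond $U$ is absorbed into $O(x^{m-1+\epsilon})$.

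I would first bound $|a(\rho)|$ factor by factor, using $\rho/2 = \tfrac14 + i\tfrac{\gamma}{2}$: the functional equation combined with Stirling gives $|\zeta(\rho/2-1)| \asymp |\gamma|^{5/4}$; the Lindel\"of bound (a consequence of RH) gives $|\zeta(\rho/2)| \ll |\gamma|^{1/4+\epsilon}$; the Euler product for $\mathcal{G}(\rho/2-1)$ converges absolutely since its local deviations from $1$ behave like $2/p^{\rho/2+1}$ with $\mathrm{Re}(\rho/2+1) = 5/4 > 1$, so $\mathcal{G}(\rho/2-1) = O(1)$; $|2^{\rho/2}+1|\asymp 1$; and the $m+1$ consecutive shifts of $\rho/2$ in the denominator contribute $\asymp |\gamma|^{m+1}$. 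Combined,
\[ |a(\rho)| \ll \frac{|\gamma|^{\frac12-m+\epsilon}}{|\zeta'(\rho)|}. \]

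I would then apply Cauchy--Schwarz on a dyadic block $T \le |\gamma| \le 2T$, which contains $\ll T\log T$ zeros. For $m=2$, Gonek--Hejhal gives
\[ \sum_{T\le|\gamma|\le 2T} |a(\rho)| \ll \Bigl(\sum_{T\le|\gamma|\le 2T} |\gamma|^{1-2m+2\epsilon}\Bigr)^{1/2} \Bigl(\sum_{T\le|\gamma|\le 2T} \frac{1}{|\zeta'(\rho)|^2}\Bigr)^{1/2} \ll T^{-1/2+\epsilon}, \]
summable dyadically. For $m\ge 3$, I would instead pair one factor of $|\rho|$ into the numerator and invoke the consequence \eqref{WMCsum} of the Weak Mertens Conjecture, $\sum_\gamma 1/|\rho\zeta'(\rho)|^2 \ll 1$; Cauchy--Schwarz then yields $\ll T^{2-m+\epsilon}$ per dyadic block, summable precisely when $m\ge 3$. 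In both cases $\sum_\gamma |a(\rho)| < \infty$, so $c_m$ is finite; applied to $|\gamma| > U\ge x^5$ the same estimate shows the truncation error is $O(x^{m-3/4} U^{-\delta}) = O(x^{m-3/4-5\delta})$ for some $\delta > 0$, absorbed into $O(x^{m-1+\epsilon})$. This establishes \eqref{EmExplicit}, and \eqref{EmBigO} and \eqref{Embound} follow at once from the triangle inequality.

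The main obstacle is that the case $m=2$ lies just on the boundary of what these tools can deliver: the polynomial growth of $\zeta(\rho/2-1)$ forced by the functional equation is almost completely absorbed by $|\gamma|^{m+1}$ in the denominator, and the Gonek--Hejhal bound is used at its sharpness. Running the same Cauchy--Schwarz with the weaker Weak Mertens sum would lose a factor of $|\gamma|$, and the dyadic sum would diverge by a power of $|\gamma|^\epsilon$; this is exactly why only $m\ge 3$ can be handled by Weak Mertens alone.
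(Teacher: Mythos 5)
Your proposal is correct and follows essentially the same route as the paper: the identical bound $|a(\rho)|\ll|\gamma|^{\frac12-m+\epsilon}/|\zeta'(\rho)|$ obtained from the functional equation, the RH bound for $1/\zeta$ in $\sigma>\frac12$, and the boundedness of $\mathcal{G}$, followed by Cauchy--Schwarz against the Gonek--Hejhal sum (for $m\ge 2$) or the Weak Mertens sum \eqref{WMCsum} (for $m\ge 3$) to get absolute convergence and the tail estimate beyond $U$. The only cosmetic difference is that you run Cauchy--Schwarz over dyadic blocks where the paper packages the same convergence statement as Lemma \ref{Gammasum-lemma} via partial summation against $V(T)\ll T$; your boundary analysis at $m=2$ matches the paper's threshold exactly.
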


Unconditionally we prove that the bounds in Theorem \ref{thm3} are sharp up to constants.

\begin{theorem} \label{thm4} For $m\ge 1$, we have 
\begin{equation} E_m(x) = \Omega_{\pm}(x^{m-\frac34}). \end{equation}
\end{theorem}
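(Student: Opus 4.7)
The plan is to adapt the Landau-style oscillation argument from our earlier paper \cite{GS2020}, which handles the $m=1$ case and proves $E_1(x) = \Omega_{\pm}(x^{1/4})$ directly. For $m=1$ the theorem is thus already known, so I focus on $m\ge 2$. The idea is to exhibit a non-real singularity on the line $\Re s = m - \tfrac{3}{4}$ of the Mellin transform
\[
\widetilde{E}_m(s) \;:=\; \int_{1}^{\infty} E_m(x)\,x^{-s-1}\,dx,
\]
coming from a fixed zero of $\zeta$ on the critical line, and then turn this into $\Omega_{\pm}$ by a positivity/Landau argument; the integral converges absolutely for $\Re s > m$ by the trivial bound $|E_m(x)| \ll x^m$.

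The Mellin--Barnes representation underlying Theorem~\ref{thm1} expresses $S_m(x)$ as a contour integral of the Dirichlet series of $\mathfrak{S}(k)$ weighted by $x^{s+m}/(s(s+1)\cdots(s+m))$; subtracting the main terms produces a meromorphic continuation of $\widetilde{E}_m(s)$ to $\Re s > m-1$ whose singularities are simple poles at $s = m-1 + \rho/2$ for the nontrivial zeros $\rho$ of $\zeta$. In particular, the first nontrivial zero $\rho_1 = \tfrac12 + i\gamma_1$, which lies on the critical line and is simple by direct numerical verification, yields a pole of $\widetilde{E}_m$ at $s_1 := m-\tfrac34 + i\gamma_1/2$ whose residue equals $a(\rho_1, m)$; this is non-zero because each factor $\zeta(\rho_1/2 - 1)$, $\zeta(\rho_1/2)$, $\mathcal{G}(\rho_1/2 - 1)$, $2^{\rho_1/2}+1$, $\zeta'(\rho_1)$ appearing in \eqref{a_rho} is easily checked numerically to be non-zero. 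Unconditionally, if $\zeta$ has a zero with real part exceeding $\tfrac12$ then $\widetilde{E}_m$ has a pole with real part strictly exceeding $m - \tfrac34$, in which case the argument below yields an even stronger conclusion that in particular implies $\Omega_{\pm}(x^{m-3/4})$.

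For $\Omega_{-}$ I argue by contradiction: suppose $\liminf_{x\to\infty} E_m(x)/x^{m-3/4} \ge 0$, so that for every $\varepsilon > 0$ the function $h_\varepsilon(x) := E_m(x) + \varepsilon\, x^{m-3/4}$ is non-negative for all $x \ge X_0(\varepsilon)$. Then the dyadic average
\[
I_\varepsilon(X) \;:=\; \int_{X}^{2X} h_\varepsilon(x)\, x^{-(m-3/4)-1}\,dx \;\ge\; 0,
\]
and inserting the explicit formula \eqref{Em(x)} yields
\[
I_\varepsilon(X) \;=\; \varepsilon\log 2 \;+\; \sum_{|\gamma|\le U} a(\rho)\,\frac{2^{i\gamma/2}-1}{i\gamma/2}\, X^{i\gamma/2} \;+\; o(1)
\]
as $X\to\infty$. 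The sum is an almost-periodic function of $\log X$ whose $\rho_1$-coefficient is non-zero; by a pigeonhole argument on the unit circle I select a sequence $X_n\to\infty$ along which the $\rho_1$-contribution is at most $-c\,|a(\rho_1,m)|$ while the remaining terms contribute at most $c\,|a(\rho_1,m)|/2$ in absolute value. This forces $\varepsilon\log 2 \ge c\,|a(\rho_1,m)|/2 > 0$, contradicting the freedom to take $\varepsilon$ arbitrarily small. Replacing $E_m$ by $-E_m$ gives $\Omega_{+}$.

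The main obstacle is uniformly controlling the tail of the sum over zeros in \eqref{Em(x)} when selecting the $X_n$, together with securing polynomial-in-$|t|$ bounds for the integrand in the Mellin--Barnes contour across the strip $(m-1,\, m - \tfrac34]$. Both reduce to standard zero-density and convexity/mean-value estimates for $\zeta(s)$ and $1/\zeta(s)$; the smoother weight $(x-k)^m$ for $m\ge 2$ furnishes extra decay in the contour integral that makes this step routine relative to the $m=1$ analysis carried out in \cite{GS2020}.
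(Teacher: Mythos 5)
Your reduction is sound in outline: you correctly dispose of the case of a zero off the line (the paper does this via \eqref{Theta}), you correctly identify that everything hinges on the pole of the Mellin transform at $s=\rho_1/2-1=-\tfrac34+i\gamma_1/2$ (in your normalization $m-\tfrac34+i\gamma_1/2$) with nonvanishing residue $a(\rho_1,m)$, and deferring $m=1$ to \cite{GS2020} is legitimate. But the execution for $m\ge 2$ has a genuine gap at exactly the step you flag as the ``main obstacle.'' After inserting \eqref{Em(x)} into your dyadic average, the quantity you must control is
\[
\sum_{\substack{|\gamma|\le U\\ \gamma\neq\pm\gamma_1}} a(\rho)\,\frac{2^{i\gamma/2}-1}{i\gamma/2}\,X^{i\gamma/2},
\qquad U\asymp X^{5},
\]
and $|a(\rho)|$ contains the factor $1/|\zeta'(\rho)|$. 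There is no unconditional --- nor even RH-conditional --- upper bound for $\sum_{|\gamma|\le U}|a(\rho)|/|\gamma|$: one does not even know that $\zeta'(\rho)\neq 0$ at every zero, let alone a lower bound for it. This is precisely why Theorem~\ref{thm3} of the paper has to assume the Gonek--Hejhal Conjecture (or the Weak Mertens Conjecture) to make $\sum_\gamma|a(\rho)|$ converge. So your ``pigeonhole on the unit circle'' step, which needs the non-$\rho_1$ terms to total at most $c|a(\rho_1,m)|/2$ along a sequence $X_n\to\infty$, cannot be carried out by ``standard zero-density and convexity/mean-value estimates''; no such estimates control $1/\zeta'(\rho)$. (Even granting coefficient bounds, simultaneously aligning $\gg X^5\log X$ frequencies by Dirichlet/Kronecker is not available.) A secondary, fixable issue: the $U$ in \eqref{Em(x)} depends on $x$, so it cannot be pulled outside the integral over $x\in[X,2X]$ without rerunning the contour argument with a single $T^*$ chosen for the whole dyadic block.

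The paper's proof avoids all of this by applying Ingham's theorem (the Proposition in Section~6) to $A(x)=E_m(x)/x^m$. Ingham's hypothesis is only that $\widehat{A}(s)-\widehat{S}_T(s)$ continues analytically to $\sigma\ge\sigma_0$, $|t|\le T$ for a \emph{fixed finite} $T$; the conclusion sandwiches $\liminf$ and $\limsup$ of $A(x)/x^{\sigma_0}$ around the finite Fej\'er-weighted sum $S_T^*(x_0)$. Taking $T=10$ traps only the single frequency $\check\gamma_1=\gamma_1/2$, so the entire tail over higher zeros never has to be estimated, and choosing $x_0$ to rotate $a(\rho_1)x_0^{i\gamma_1/2}$ to $\pm|a(\rho_1)|$ finishes the proof. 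I recommend you replace your Landau-style dyadic-average argument with this (or an equivalent finite-truncation Tauberian device); as written, the tail-control step does not go through.
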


Conditionally we can strengthen Theorem \ref{thm4} and show that the bounds in \eqref{Embound} are sharp. 

\begin{theorem} \label{thm5} If the Riemann zeta-function has a complex zero off the half-line or a zero with multiplicity 2 or higher, then for $m\ge 1$
\begin{equation} \label{thm5eq1} \limsup_{x\to\infty} \frac{E_m(x)}{x^{m-\frac34}} = \infty \qquad \text{and} \qquad \liminf_{x\to\infty} \frac{E_m(x)}{x^{m-\frac34}} = -\infty .\end{equation}
For the alternative situation, we assume the Riemann Hypothesis is true and all the zeros are simple. Now let $\mathcal{S}$ be the set of distinct positive imaginary parts of the zeros of the Riemann zeta-function, and assume the hypothesis that every finite subset of $\mathcal{S}$ is linearly independent over the integers. If $\sum_\gamma|a(\rho)|$ diverges then \eqref{thm5eq1} holds. However if $m\ge 2$ and $c_m =\sum_\gamma|a(\rho)|$ converges, then we have
\begin{equation}\label{thm5eq2} \limsup_{x\to\infty} \frac{E_m(x)}{x^{m-\frac34}} \ge c_m \qquad \text{and} \qquad \liminf_{x\to\infty} \frac{E_m(x)}{x^{m-\frac34}} \le -c_m. \end{equation}
When $m=1$, $\sum_\gamma|a(\rho)|$ diverges, and therefore writing $E_1(x)=E(x)$, we have, assuming the linear independence hypothesis,
\begin{equation} \label{thm5eq3} \limsup_{x\to\infty} \frac{E(x)}{x^{\frac14}} = \infty \qquad \text{and} \qquad \liminf_{x\to\infty} \frac{E(x)}{x^{\frac14}} = -\infty . \end{equation}
\end{theorem}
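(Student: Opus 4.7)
The plan is to split Theorem \ref{thm5} into cases based on which hypotheses hold, pairing Theorem \ref{thm1}'s explicit formula with two classical devices: Landau's oscillation theorem when the Riemann Hypothesis fails, and Kronecker's equidistribution theorem (in the spirit of Ingham's treatment of the Mertens function) when the linear independence hypothesis is assumed.

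Suppose first that the Riemann Hypothesis fails; by the functional equation there is a zero $\rho_0 = \beta_0 + i\gamma_0$ with $\beta_0 > 1/2$. The Mellin transform $\hat E_m(s) := \int_1^\infty E_m(x)\, x^{-s-1}\,dx$ has a meromorphic continuation (inherited from the Dirichlet series underlying Theorem \ref{thm1}) with a pole at each $s = m - 1 + \rho/2$; the pole at $s = m - 1 + \rho_0/2$ lies strictly to the right of $s = m - 3/4$, while real singularities (coming from the trivial zeros of $\zeta$) lie at or to the left of $m - 3/4$. If $\liminf E_m(x)/x^{m-3/4} \ge -C$, then $f(x) := E_m(x) + C x^{m-3/4} \ge 0$ for all large $x$, and Landau's theorem asserts that the abscissa of convergence $\sigma_0$ of $\hat f(s) = \hat E_m(s) + C/(s - (m-3/4))$ is a real singularity of $\hat f$. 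But analyticity of $\hat f$ in $\Re(s) > \sigma_0$ forces $\sigma_0 \ge \Re(m - 1 + \rho_0/2) > m - 3/4$, contradicting the absence of real singularities so far to the right. Thus $\liminf = -\infty$, and the symmetric argument gives $\limsup = +\infty$. When instead a zero on the half-line has multiplicity $\ell \ge 2$, a direct reading of Theorem \ref{thm1} suffices: the associated summand is $(\log x)^{\ell-1} x^{m-3/4}$ times an oscillatory factor, and no other term has comparable amplitude, so this oscillation cannot be absorbed by any fixed multiple of $x^{m-3/4}$.

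Next, assume RH, simplicity of zeros, linear independence, $m \ge 2$, and $c_m < \infty$. Absolute convergence combined with Theorem \ref{thm1} yields
\[ \frac{E_m(x)}{x^{m-3/4}} = \sum_\gamma a(\rho)\, x^{i\gamma/2} + o(1), \]
a uniformly almost periodic function of $t = \log x$ with Bohr--Fourier coefficients $a(\rho)$ at frequencies $\gamma/2$. Kronecker's theorem, applied to the linearly independent (over $\mathbb{Z}$) positive ordinates, produces arbitrarily large $x$ for which, on any prescribed truncation $|\gamma| \le \Gamma$, the phases $x^{i\gamma/2}$ are simultaneously within $\epsilon$ of $\overline{a(\rho)}/|a(\rho)|$. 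Conjugate pairing then makes each term nearly equal to $|a(\rho)|$, so the truncated sum approaches $\sum_{|\gamma| \le \Gamma}|a(\rho)|$; the absolutely convergent tail is at most $c_m - \sum_{|\gamma| \le \Gamma}|a(\rho)|$, which tends to $0$ as $\Gamma \to \infty$. Letting $\Gamma \to \infty$ and $\epsilon \to 0$ gives $\limsup E_m(x)/x^{m-3/4} \ge c_m$, and the opposite phase choice gives $\liminf \le -c_m$.

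Finally, we treat the divergent case under RH, simplicity, and linear independence, which includes $m = 1$: there divergence of $c_1$ is first verified using the functional equation to show $|a(\rho)| \asymp |\gamma|^{-7/8}|\zeta'(\rho)|^{-1}$ together with the Riemann--von Mangoldt density and standard unconditional bounds on $|\zeta'(\rho)|$. Absolute convergence now fails, so Kronecker cannot be applied directly to the formal sum; we follow Ingham's smoothing approach. Convolving $E_m(x)$ with a narrow bump $\phi_h$ in the $\log x$ variable replaces the coefficients $a(\rho)$ by $a(\rho)\hat\phi(\gamma h/2)$, producing an absolutely convergent series for each $h > 0$. Kronecker applied to the smoothed quantity realises values approaching $\sum_\gamma |a(\rho)|\,|\hat\phi(\gamma h/2)|$, which tends to $c_m = \infty$ as $h \to 0$; un-smoothing then transfers this to pointwise values of $E_m/x^{m-3/4}$, using that $E_m$ is differentiable with $E_m' = m E_{m-1}$ controlled via Theorem \ref{thm2}. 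This delivers \eqref{thm5eq1} in the divergent case, and in particular \eqref{thm5eq3} for $m = 1$. The main obstacle is precisely this un-smoothing step, which requires a careful quantitative coordination of the smoothing scale $h$ with the Kronecker truncation $\Gamma$ so that $\hat\phi(\gamma h/2) \approx 1$ on $|\gamma| \le \Gamma$ while $\Gamma$ is taken large enough that $\sum_{|\gamma|\le \Gamma}|a(\rho)|$ exceeds any prescribed bound.
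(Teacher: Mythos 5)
Your first case (RH false) via Landau's oscillation theorem is sound and is essentially the argument the paper invokes by citing the Mertens-problem analogue $E_m(x)=\Omega_\pm(x^{m+\Theta/2-1}\log^{m_\rho-1}x)$; and your treatment of the convergent case ($m\ge2$, $c_m<\infty$) by applying Kronecker directly to the truncated explicit formula of Theorem \ref{thm1} plus the absolutely convergent tail is a legitimate alternative that the paper itself points out after \eqref{thm3+5}. Two parts, however, have genuine gaps. First, for a multiple zero on the half-line you cannot argue that ``no other term has comparable amplitude'' by reading off Theorem \ref{thm1}: the sum over the remaining zeros is not known to be absolutely convergent (indeed for $m=1$ the paper proves $\sum_\gamma|a(\rho)|$ diverges), so term-by-term domination fails; this case needs the same Landau/Ingham pole-order argument you used for the off-line zero, with the order-$\ell$ pole at $s=\rho/2-1$ producing the $\log^{\ell-1}x$ oscillation.

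Second, and more seriously, your divergent case --- which is the only route to \eqref{thm5eq3} --- leans on Theorem \ref{thm1} and Theorem \ref{thm2}, both of which are stated only for $m\ge2$; for $m=1$ there is no pointwise explicit formula available (the Ces\`aro weight is not smooth enough for the contour argument), and the identity $E_1'$ ``controlled via Theorem \ref{thm2}'' is vacuous. Moreover, even for $m\ge2$ the truncation point $U=U(x)\in[x^5,2x^5]$ in \eqref{Em(x)} varies with $x$, so convolving that formula in $\log x$ does not straightforwardly produce $\sum_\gamma a(\rho)\hat\phi(\gamma h/2)x^{i\gamma/2}$. You have also misplaced the difficulty: un-smoothing a \emph{lower} bound on a nonnegative-kernel average is trivial (the sup over the window dominates the average); the hard step is establishing the smoothed explicit formula at all. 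This is exactly what the paper's Proposition (Ingham's theorem, applied to $A(x)=E_m(x)/x^m$ with $\widehat A(s)$ meromorphic for $\sigma>-1$ with simple poles at $\rho/2-1$) delivers in one stroke for every $m\ge1$: it yields the Fej\'er-weighted finite sum $S_T^*(x_0)$ as an unconditional lower bound for the limsup, with no convergence hypothesis, no truncation issue, and no un-smoothing, after which Kronecker and the divergence of $\sum_\gamma|a(\rho)|$ (proved via the functional equation, which gives $|a(\rho)|\gg|\rho|^{-1/2-\epsilon}|\zeta'(\rho)|^{-1}$ for $m=1$ --- not $|\gamma|^{-7/8}$ --- together with the \emph{conditional} bound $\sum_{0<\gamma\le T}|\zeta'(\rho)|^{-1}\gg T$ of Montgomery--Vaughan, Theorem 15.6, not ``standard unconditional bounds'') finish the proof. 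Without Ingham's theorem or an equivalent Tauberian substitute, your $m=1$ argument does not close.
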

\noindent By Theorem \ref{thm3} the upper bound in \eqref{Embound} together with \eqref{thm5eq2} implies that for $m\ge 2$
\begin{equation}\label{thm3+5} \limsup_{x\to\infty} \frac{E_m(x)}{x^{m-\frac34}} = c_m \qquad \text{and} \qquad \liminf_{x\to\infty} \frac{E_m(x)}{x^{m-\frac34}} = -c_m. \end{equation}
This is under the assumption of the Riemann Hypothesis, simple zeros, the Gonek-Hejhal Conjecture, and the linear independence hypothesis. This result can be proved more easily by directly applying Kronecker's Theorem in the explicit formula \eqref{EmExplicit} of Theorem \ref{thm3}.

Theorem \ref{thm4} was proved in \cite{GS2020} in the case $m=1$ and the same proof with minor modifications works for all $m\ge 1 $. Here we will prove Theorems \ref{thm4} and \ref{thm5} by making use of a theorem of Ingham \cite{Ingham1942}. In proving Theorem \ref{thm5} this allows us to avoid the Gonek-Hejhal Conjecture or the Weak Mertens Conjecture by providing a replacement for \eqref{EmExplicit}.

The proof of \eqref{thm5eq2} requires that the imaginary parts of the zeros are linearly independent over all the integers, but following \cite{Bateman...71} and later authors, we can prove that the linear independence needed to obtain \eqref{thm5eq3} can be limited to coefficients $-2,-1,0,1,2$, with at most one coefficient of $2$ or $-2$. The proof uses Ingham's theorem and a Riesz product that first occurred in a proof of Kronecker's theorem by Bohr and Jessen,\cite{BJ1932}, see also \cite[9.3]{Katznelson1976}.


\section{The Inverse Mellin Transform of 
\texorpdfstring{$S_m(x)$}{Sm(x)}}

Define the singular series generating function
\begin{equation} \label{F} F(s) = \sum_{k=1}^\infty \frac{\mathfrak{S}(k)}{k^s}, \end{equation}
where as usual $s=\sigma + i t$. This series converges for $\sigma >1$. The first lemma from \cite{GS2020} provides the analytic continuation of $F(s)$ to $\sigma >-1$. 

\begin{lemma} \label{F(s)lem} We have, for $\sigma >-1$, 
\begin{equation} \label{F(s)formula}
F(s) 
= \left(\frac{4C_2}{2^{s+1}+1} \right) \frac{ \zeta(s)\zeta(s+1)}{\zeta(2s+2)}\mathcal{G}(s),\end{equation}
where $\mathcal{G}(s)$ is given in \eqref{Gcal}.
\end{lemma}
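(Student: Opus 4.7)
The plan is straightforward: obtain an Euler product for $F(s)$ directly from the definition, then show algebraically that it agrees with the right-hand side of \eqref{F(s)formula} Euler factor by Euler factor, and finally use the convergence of $\mathcal{G}(s)$ to read off the analytic continuation.

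First, since $\mathfrak{S}(k)$ vanishes on odd $k$, I would write each term of \eqref{F} in the form $k = 2^a m$ with $a \ge 1$ and $m$ odd. The definition \eqref{SingProd} gives $\mathfrak{S}(2^a m) = 2C_2\prod_{p\mid m, p>2}(p-1)/(p-2)$, which depends only on $m$. Factoring the $a$-sum as $\sum_{a\ge 1}2^{-as} = 1/(2^s-1)$ and expressing the $m$-sum as an Euler product (the local multiplicand at odd $p$ being $1 + \tfrac{p-1}{p-2}\sum_{k\ge 1}p^{-ks}$), I obtain for $\sigma > 1$ the provisional formula
\begin{equation*}
F(s) = \frac{2C_2}{2^s-1}\prod_{p>2}\left(1+\frac{p-1}{(p-2)(p^s-1)}\right).
\end{equation*}

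Second, I would match this against the claimed right-hand side of \eqref{F(s)formula}. Starting from the standard identity $\zeta(s)\zeta(s+1)/\zeta(2s+2) = \prod_p (1+p^{-s-1})/(1-p^{-s})$, whose Euler factor at $p=2$ is $(2^{s+1}+1)/(2(2^s-1))$, the prefactor $4C_2/(2^{s+1}+1)$ reduces it to $2C_2/(2^s-1)$, matching the constant factor above. For an odd prime $p$, the corresponding local factor of $\zeta(s)\zeta(s+1)/\zeta(2s+2)$ is $(p^{s+1}+1)/(p(p^s-1))$, and multiplying by the $p$-th factor of $\mathcal{G}(s)$ from \eqref{Gcal} and clearing denominators gives
\begin{equation*}
\frac{p^{s+1}+1}{p(p^s-1)}\left(1+\frac{2}{(p-2)(p^{s+1}+1)}\right) = \frac{(p-2)p^s+1}{(p-2)(p^s-1)} = 1 + \frac{p-1}{(p-2)(p^s-1)},
\end{equation*}
which is precisely the Euler factor appearing in the provisional formula. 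This mechanical identity is the only substantive computation in the proof; the ``main obstacle,'' such as it is, is just taking care not to drop or mislocate a factor.

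Finally, for the analytic continuation, I would note that each Euler factor of $\mathcal{G}(s)$ satisfies $1 + 2/((p-2)(p^{s+1}+1)) = 1 + O(p^{-\sigma-2})$ uniformly on compact subsets of the half-plane $\sigma > -1$, so $\mathcal{G}(s)$ converges absolutely and uniformly there and defines a holomorphic function. Since $\zeta(s)$, $\zeta(s+1)$, and $1/\zeta(2s+2)$ are meromorphic on $\mathbb{C}$, the right-hand side of \eqref{F(s)formula} furnishes the desired (meromorphic) continuation of $F(s)$ to $\sigma > -1$, completing the proof.
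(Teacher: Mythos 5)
Your proof is correct: the Euler-product factorization of $F(s)$ over $k=2^a m$, the local identity $\frac{p^{s+1}+1}{p(p^s-1)}\bigl(1+\frac{2}{(p-2)(p^{s+1}+1)}\bigr)=\frac{(p-2)p^s+1}{(p-2)(p^s-1)}=1+\frac{p-1}{(p-2)(p^s-1)}$, and the matching of the $p=2$ factor all check out, as does the continuation argument via absolute convergence of $\mathcal{G}(s)$ for $\sigma>-1$. The paper itself does not reprove this lemma (it is quoted from [GS20]), but what you have written is precisely the standard computation that establishes it.
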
 

We now evaluate $S_m(x)$ as a contour integral. Using the formula in Theorem B of Ingham \cite{Ingham1932}, if $m$ is a positive integer, $c>0$, and $x>0$, then we have
\begin{equation} \label{CesaroContour} \frac{m!}{2\pi i} \int_{c-i\infty}^{c+i\infty} \frac{x^{s+m}}{s(s+1)(s+2) \cdots (s+m)} \, ds = \begin{cases}
0 & \mbox{if $0< x\le 1$}, \\
(x-1)^m & \mbox{if $ x\ge 1$. }\end{cases}
\end{equation}
Hence, for $c>1$,
\begin{equation} \label{Scontour} S_m(x) = \frac{1}{2\pi i} \int_{c-i\infty}^{c+i\infty}\mathcal{F}(s) \, ds , \end{equation}
where 
\begin{equation} \label{calF} \mathcal{F}(s) := \frac{m! F(s) x^{s+m}}{s(s+1)(s+2) \cdots (s+m)} = \frac{4C_2 m!\zeta(s)\zeta(s+1)\mathcal{G}(s)x^{s+m}}{(2^{s+1}+1)\zeta(2s+2)s(s+1)(s+2) \cdots (s+m)}. \end{equation}
We see by Lemma \ref{F(s)lem} that for $\sigma > -1$ the singularities of $\mathcal{F}(s)$ consist of a simple pole at $s=1$, a double pole at $s=0$, and poles of order $m(\rho)$ at $s= \rho/2-1$, where $\rho$ denotes any
complex zero of $\zeta(s)$. We need to compute the residues at these poles. Letting $R_{\mathcal{F}}(a) = \mbox{\text Res}(\mathcal{F}(s);s=a)$, we collect our results in the following lemma. The case when $m=1$ was already done in \cite{GS2020}.

\begin{lemma}\label{lemma2.2} The residues of the singularities of $\mathcal{F}(s)$ for $\sigma >-1$ are
\begin{equation} R_{\mathcal{F}}(1) = \frac{x^{m+1}}{m+1}, \end{equation}
 \begin{equation} R_{\mathcal{F}}(0) = -\frac12\left( \log x -H_m +\gamma +\log{2\pi}\right)x^m, \end{equation}
where $H_m = \sum_{n=1}^m \frac1n$ is the $m$-th Harmonic number,
\begin{equation} R_{\mathcal{F}}(\frac{\rho}{2}-1) =\frac{ 2C_2m!\zeta(\frac{\rho}{2}-1)\zeta(\frac{\rho}{2})\mathcal{G}(\frac{\rho}{2}-1)}{(2^{\frac{\rho}{2}}+1)\zeta'(\rho) (\frac{\rho}{2}-1)(\frac{\rho}{2})(\frac{\rho}{2}+1)\cdots (\frac{\rho}{2}+m-1)}x^{\frac{\rho}{2}+m-1},
\qquad \text{if} \ \ m_\rho=1,
\end{equation}
and in general
\begin{equation} \label{ellresidue} R_{\mathcal{F}}(\frac{\rho}{2}-1) =x^{\frac{\rho}{2}+m-1}\sum_{j=0}^{\ell-1} A_j(\rho,m)(\log x)^j, \qquad \text{if} \ \ m_\rho=\ell, \end{equation}
for constants $A_j(\rho,m)$.
\end{lemma}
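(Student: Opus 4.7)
The plan is to substitute the Lemma~\ref{F(s)lem} formula for $F(s)$ into
\[ \mathcal{F}(s) = \frac{m! F(s) x^{s+m}}{s(s+1)(s+2)\cdots(s+m)} \]
and extract each residue by Laurent expansion of the factors around the relevant pole. The pole structure in $\sigma>-1$ is read off immediately: $\zeta(s)$ gives a simple pole at $s=1$; the $1/s$ in the denominator combines with the simple pole of $\zeta(s+1)$ at $s=0$ to produce a double pole there; and each nontrivial zero $\rho$ of $\zeta(2s+2)$ produces a pole of order $m_\rho$ at $s=\rho/2-1$, with the trivial zeros of $\zeta$ lying outside the region.

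At $s=1$ the residue factors as $(\mathrm{Res}_{s=1}F(s))\cdot\frac{m!\,x^{m+1}}{(m+1)!}$, so the job reduces to proving $\mathrm{Res}_{s=1}F(s)=\frac{4C_2\zeta(2)\mathcal{G}(1)}{5\zeta(4)}=1$. This is an Euler-product identity: writing $C_2=\prod_{p>2}\frac{p(p-2)}{(p-1)^2}$, expanding $\zeta(2)$, $\zeta(4)$ and $\mathcal{G}(1)$ as products over $p>2$ with the prime $2$ separated, the primewise numerators and denominators cancel and the constant prefactors match.

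For the double pole at $s=0$, I would isolate the holomorphic part by writing $\mathcal{F}(s)=\frac{m!K(s)\zeta(s)\zeta(s+1)x^{s+m}}{s(s+1)\cdots(s+m)}$ with $K(s):=\frac{4C_2\mathcal{G}(s)}{(2^{s+1}+1)\zeta(2s+2)}$, and Taylor-expand each piece to first order using $\zeta(s)=-\tfrac12(1+\log(2\pi)\,s)+O(s^2)$, $\zeta(s+1)=1/s+\gamma+O(s)$, $x^{s+m}=x^m(1+s\log x)+O(s^2)$, and $\frac{m!}{\prod_{k=1}^m(s+k)}=1-sH_m+O(s^2)$. Multiplying these out and reading off the coefficient of $s^{-1}$ gives
\[ R_{\mathcal{F}}(0)=-\tfrac{x^m}{2}\Bigl(K(0)\bigl(\log x+\gamma+\log 2\pi-H_m\bigr)+K'(0)\Bigr). \]
The same Euler-product cancellation used at $s=1$ delivers $K(0)=1$; and a short logarithmic-derivative calculation, identifying $\mathcal{G}'(0)/\mathcal{G}(0)=-2\sum_{p>2}\frac{\log p}{p^2-1}$ and matching it against $\zeta'(2)/\zeta(2)=-\sum_{p}\frac{\log p}{p^2-1}$, shows $K'(0)=\mathcal{G}'(0)/\mathcal{G}(0)-\tfrac{2\log 2}{3}-2\zeta'(2)/\zeta(2)=0$. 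This collapses $R_{\mathcal{F}}(0)$ to the stated expression.

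At $s=\rho/2-1$ only $1/\zeta(2s+2)$ is singular. For simple $\rho$, the chain rule gives $\zeta(2s+2)=2\zeta'(\rho)(s-(\rho/2-1))+O(\,\cdot\,)^2$, so $\mathrm{Res}_{s=\rho/2-1}1/\zeta(2s+2)=1/(2\zeta'(\rho))$; multiplying by the values of the remaining analytic factors of $\mathcal{F}$ at $s=\rho/2-1$ produces exactly the formula in the statement. If $m_\rho=\ell\ge 2$, the Laurent tail of $1/\zeta(2s+2)$ has order $\ell$; since $x^{s+m}=x^{\rho/2+m-1}\sum_{j\ge 0}\frac{(\log x)^j}{j!}(s-(\rho/2-1))^j$ is the only factor introducing $\log x$, Taylor-expanding the remaining factors and extracting the $(s-(\rho/2-1))^{-1}$ coefficient yields a polynomial of degree $\ell-1$ in $\log x$ whose coefficients $A_j(\rho,m)$ are explicit polynomials in $\zeta^{(k)}(\rho)$ for $k\le\ell$ and in the values and derivatives of the remaining factors at $\rho/2-1$. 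The main obstacle in the whole lemma is the identity $K'(0)=0$: without it an extra additive constant would spoil the stated $R_{\mathcal{F}}(0)$, so the logarithmic-derivative matching between $\mathcal{G}$ at $0$ and $\zeta$ at $2$ is the key nontrivial input, with the remainder of the proof reducing to routine bookkeeping of Laurent coefficients.
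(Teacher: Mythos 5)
Your proposal is correct and follows essentially the same route as the paper: identify the pole structure from Lemma \ref{F(s)lem}, evaluate $R_{\mathcal{F}}(1)$ and $K(0)$ (the paper's $U(0)$) by the Euler-product identity $\prod_{p>2}\frac{(p-1)^2}{p(p-2)}=1/C_2$, obtain $R_{\mathcal{F}}(0)$ from the Laurent expansion at the double pole together with the vanishing of $\frac{\mathcal{G}'}{\mathcal{G}}(0)-\frac23\log 2-2\frac{\zeta'}{\zeta}(2)$, and treat the poles at $s=\rho/2-1$ via the residue $1/(2\zeta'(\rho))$ of $1/\zeta(2s+2)$ in the simple case and by multiplying the Laurent tail against the expansion of $x^{s+m}$ in the multiple case. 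The only difference is organizational (you Taylor-expand the factors separately where the paper logarithmically differentiates a single bundled function $U(s)$), and your explicit reduction of $K'(0)=0$ to the two prime sums $-2\sum_{p>2}\frac{\log p}{p^2-1}$ and $\zeta'(2)/\zeta(2)$ is exactly the cancellation the paper records.
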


\begin{proof}
For the simple pole at $s=1$ with residue 1 of $\zeta(s)$, we have 
\[ R_{\mathcal{F}}(1) = \frac{4C_2 m!\zeta(2) \mathcal{G}(1) x^{m+1}}{5\zeta(4)(m+1)!} = \frac{x^{m+1}}{m+1} \]
since
\[ \begin{split} \frac{4\zeta(2) \mathcal{G}(1)}{5\zeta(4)} &= \prod_{p>2} 
\left( 1-\frac{1}{p^2}\right)^{-1} \left( 1-\frac{1}{p^4}\right)\left(1+\frac{2}{(p-2)(p^2+1)} \right)\\ &
=\prod_{p>2} 
\left(\frac{(p-1)^2}{p(p-2)}\right) \\ &
= 1/C_2 . \end{split} \]

Next, for the double pole at $s=0$ of $\zeta(s+1)/s$, we let 
\[ U(s) := \frac{s}{\zeta(s+1)}\mathcal{F}(s)
= \frac{4C_2 m!\zeta(s)\mathcal{G}(s)x^{s+m}}{(2^{s+1}+1)\zeta(2s+2)(s+1)(s+2) \cdots (s+m)}.\]
Since $U(s)$ is analytic at $s=0$ and $\zeta(s) = \frac{1}{s-1} + \gamma + O(|s-1|)$ for $s$ near $1$, see \cite[Eq. (2.1.16)]{Titchmarsh}, we obtain the Laurent expansion around $s=0$
\[\begin{split} \mathcal{F}(s) &= \frac{\zeta(s+1)}{s} U(s)\\& = \left(\frac{1}{s^2} + \frac{\gamma}{s} + O(1)\right)\left( U(0) + sU'(0) + O(|s|^2)\right)\\&
= \frac{U(0)}{s^2} + \frac{\gamma U(0) +U'(0)}{s} +O(1), \end{split} \]
and therefore
\[ R_{\mathcal{F}}(0) = \left(\frac{U'(0)}{U(0)} +\gamma\right) U(0). \]
Here $\gamma \simeq 0.577216$ is the Euler-Mascheroni constant. We will make use below of the special values 
$$
\frac{\zeta'(0)}{\zeta(0)} = \log{2\pi}, \qquad \text{and} \qquad \zeta(0) = - \frac12, $$
see \cite[2.4]{Titchmarsh}, and also the harmonic numbers $H_m= \sum_{n=1}^m \frac1n$.
First,
$$
U(0) = \frac{4C_2 \zeta(0)\mathcal{G}(0)x^{m}}{3\zeta(2)}
= -\frac12 x^{m} 
$$
since 
\[ \begin{split} \frac{4 \mathcal{G}(0)}{3\zeta(2)} &= \prod_{p>2} 
\left( 1-\frac{1}{p^2}\right) \left(1+\frac{2}{(p-2)(p+1)} \right)\\ &
=\prod_{p>2} 
\left(\frac{(p-1)^2}{p(p-2)}\right) \\ &
= 1/C_2 . \end{split} \]
Next, logarithmically differentiating $U(s)$ and evaluating at $s=0$, we obtain
\[\begin{split}
\frac{U'(0)}{U(0)} 
&= 
 \frac{\zeta'}{\zeta}(0) + \frac{\mathcal{G}'}{\mathcal{G}}(0) + \log{x} -\frac23\log{2}-2\frac{\zeta'}{\zeta}(2) - H_m
 \\ & = \log x +\log 2\pi - H_m, \end{split} \]
since
\[ \begin{split} \frac{\mathcal{G}'}{\mathcal{G}}(0) -\frac23\log{2}-2\frac{\zeta'}{\zeta}(2)
&= \sum_{p>2} \frac{-2p\log{p}} {(p-2)(p+1)^2\left(1+\frac{2}{(p-2)(p+1)} \right)}+ \sum_{p>2} \frac{2\log{p}}{p^2-1} \\& = 0. \end{split} \]
Therefore we conclude
\[ R_{\mathcal{F}}(0) = -\frac12\left( \log x -H_m +\gamma +\log{2\pi}\right)x^m .\]

Finally, we evaluate $R_{\mathcal{F}}(\frac{\rho}{2}-1)$ where the $\rho$'s are the complex zeros of $\zeta(s)$. If $\rho$ is a simple zero of $\zeta(s)$, then $\frac{1}{\zeta(2s+2)}$ will have a corresponding simple pole at $s= \frac{\rho}{2}-1$ with residue $\frac{1}{2\zeta'(\rho)}$. Thus
\[ R_{\mathcal{F}}(\frac{\rho}{2}-1) = \frac{ 2C_2m!\zeta(\frac{\rho}{2}-1)\zeta(\frac{\rho}{2})\mathcal{G}(\frac{\rho}{2}-1)}{(2^{\frac{\rho}{2}}+1)\zeta'(\rho) (\frac{\rho}{2}-1)(\frac{\rho}{2})(\frac{\rho}{2}+1)\cdots (\frac{\rho}{2}+m-1)}x^{\frac{\rho}{2}+m-1}. \]

In the general case of a zero $\rho$ with multiplicity $m_\rho= \ell$, we expand around $s= \rho/2-1$ and have
\[ x^{s+m} = x^{\frac{\rho}{2}+m-1} x^{s-\frac{\rho}{2}+1} 
= x^{\frac{\rho}{2}+m-1}\sum_{k=0}^\infty \frac{ ((s-\frac{\rho}{2}+1)\log x)^k}{k!}, \]
while the remaining part of $\mathcal{F}(s)$ expands into a Laurent expansion 
\[ \frac{4C_2 m!\zeta(s)\zeta(s+1)\mathcal{G}(s)}{(2^{s+1}+1)\zeta(2s+2)s(s+1)(s+2) \cdots (s+m)} = \sum_{j= -\ell}^\infty \mathfrak{a}_j(\rho,m)(s-\frac{\rho}{2}+1)^j \]
which does not depend on $x$, and one obtains \eqref{ellresidue} on multiplying out and collecting the coefficients of $(s-\frac{\rho}{2}+1)^{-1}$, that is in \eqref{ellresidue},
$$
A_j(\rho,m) := \mathfrak{a}_{-j-1}(\rho,m)\frac{\log x)^j}{j!}.
$$
\end{proof}

\section{Proof of Theorem \ref{thm1}}

In this section we prove Theorem 1. We start with the inverse Mellin transform formula \eqref{Scontour} for $S_m(x)$ and move the contour to the left. To justify this we need various bounds for $\mathcal{F}(s)$.

For the Riemann zeta-function, we have the classical bound
\begin{equation} \label{t-bound}
\zeta(s)-\frac{1}{s-1} \ll_{\epsilon} |t|^{\mu(\sigma)+\epsilon}, \qquad s=\sigma +it,
\end{equation}
which holds for any $\epsilon>0$ with
\begin{equation} \label{mu}
\mu(\sigma) \leq
\begin{cases}
0 &\text{ if } \sigma>1, \\
(1-\sigma)/2 &\text{ if } 0\leq\sigma\leq1, \\
1/2-\sigma &\text{ if } \sigma<0,
\end{cases}
\end{equation}
(see \cite[5.1]{Titchmarsh}).
Next, we need the zero-free region of $\zeta(s)$ \cite[Theorem 3.8]{Titchmarsh})
\begin{equation} \label{zero-free}
\sigma \ge 1 - \frac{A}{\log{(|t|+3)}}
\end{equation}
for some $A>0$. 
In this region, the reciprocal of $\zeta(s)$ is analytic and satisfies the bound
\begin{equation} \label{zeta_recip}
\frac{1}{\zeta(\sigma+it)} \ll \log{(|t|+3)},
\end{equation}
see \cite[Eq. (3.11.8)]{Titchmarsh}, and for values of $A$ and the constant in \eqref{zeta_recip}, see \cite{Trudgian2015}. 

By the functional equation, we have for any fixed $B>0$ that
\begin{equation}\label{FE} |\zeta(s)| \asymp (|t|+3)^{\frac12-\sigma}|\zeta(1-s)| \end{equation} 
holds uniformly for $|\sigma|\le B$ and $|t|\ge 1$, see \cite[Corollary 10.5]{MontgomeryVaughan2007}.
We therefore conclude that $\zeta(s)$ has no zeros in the region 
\[-B\le \sigma\le \frac{A}{\log{(|t|+3)}}, \qquad |t|\ge 1, \]
and in this region 
\[\frac{1}{\zeta(\sigma+it)} \ll (|t|+3)^{\epsilon +\sigma -\frac12}. \]
Finally, we will make use of a nice result of Ramachandra and Sankaranarayanan \cite[Theorem 2]{RS1991} which allows us to avoid the need to assume the Riemann Hypothesis. For $T$ sufficiently large and $C>0$ a constant, we have
\begin{equation} \label{Ram-San} \min_{T\le t\le T+T^{1/3}} \max_{\frac12\le \sigma \le 2} |\zeta(\sigma +it)|^{-1} \le \exp(C(\log\log T)^2). \end{equation}
From this result and \eqref{FE} we conclude that for any $\epsilon>0$ and $T$ sufficiently large there exist a $T'$, $T\le T'\le T+T^{\frac13}$, such that 
\begin{equation} \label{Horizontal} |\zeta(\sigma +iT')|^{-1}\ll (T')^{-\max(\frac12-\sigma,0) + \epsilon }\ll (T')^{ \epsilon} \end{equation}
for all $-B\le \sigma \le B$.

\begin{proof}[Proof of Theorem \ref{thm1}]
We now assume $m\ge 2$ is a fixed integer, and do not include the dependence on $m$ in error estimates. Taking $c=2$ in \eqref{Scontour}, we have
\begin{equation} \label{Sc=2} S_m(x) = \frac{1}{2\pi i} \int_{2-i\infty}^{2+i\infty}\mathcal{F}(s) \, ds , \end{equation}
where 
\begin{equation} \label{calF2} \mathcal{F}(s) = \frac{4C_2 m!\zeta(s)\zeta(s+1)\mathcal{G}(s)x^{s+m}}{(2^{s+1}+1)\zeta(2s+2)s(s+1)(s+2) \cdots (s+m)}. \end{equation}
Since all the zeta-functions in $\mathcal{F}(s)$ are bounded and have no zeros when $s=2+it$, we have
\[ \mathcal{F}(2+it) \ll \frac{x^{2+m}}{(|t|+3)^{ m+1}},\]
and therefore we can truncate the integral in \eqref{Sc=2} to the range $|t|\le T$ with an error $\ll x^{2+m}/T^{ m}$. We conclude that for $m\ge 2$ and $T\ge x^{2}$
\begin{equation} \label{truncated} S_m(x) = \frac{1}{2\pi i} \int_{2-iT}^{2+iT}\mathcal{F}(s) \, ds +O(1). \end{equation}

We now consider the rectangle with corners $2\pm iT^*$ and $b\pm iT^*$, where $b=-1+\frac{a}{\log T}$ and $T^*$ is chosen as in \eqref{Horizontal} so that $T\le T^*\le 2T$, and $a$ 
is chosen small enough so that all the poles of $1/\zeta(2s+2)$ at $s= \rho/2 -1$ with $|\gamma|/2 \le T^*$ have $\beta/2 > a/\log T^*$. Thus all the singularities of $\mathcal{F}(s)$ for $-1 < \sigma \le 2$ and $ |t| \le T^*$ are inside the rectangle and not on the boundary. On the horizontal contours $s= \sigma \pm iT^*$ and for $-1\le \sigma \le 2$ we have 
\[\frac{1}{\zeta(2s+2)}\ll (T^*)^\epsilon.\]
On the vertical contour $s= b +it$, $|t|\le T^*$ we have
\[\frac{1}{\zeta(2s+2)} \ll (|t|+3)^{2b +\frac32 +\epsilon}.\] 

We integrate around this contour counterclockwise and obtain by the residue theorem and Lemma 2.2 that 
\begin{equation}\label{1asym} S_m(x) = R_{\mathcal{F}}(1) + R_{\mathcal{F}}(0) + \sum_{|\gamma |\le 2 T^*} R_{\mathcal{F}}(\rho/2-1)+\frac{1}{2\pi i} \left( \int_{2-iT^*}^{b-iT^*} + \int_{b-iT^*}^{b+iT^*} +\int_{b+iT^*}^{2+iT^*} \right) \mathcal{F}(s) \, ds +O(1). \end{equation}

We now bound the contribution from the contour integrals along the three sides of the rectangle. To do this we need a bound on $\mathcal{G}(s)$ provided by the following lemma. 
\begin{lemma} \label{g_bound} For any fixed $\delta>0$ and $-1+\delta \le \sigma \le 2$, we have
\begin{equation}\label{calG bounded} \mathcal{G}(s) \asymp_\delta 1. \end{equation}
Further, with $b=-1+\frac{a}{\log T}$, we have uniformly for $b\le \sigma \le 2$ and $|t|\ll T$
\begin{equation} \label{calGupper} \mathcal{G}(s) \ll \exp\left( B \frac{\log T}{\log\log T}\right),\end{equation}
where $B$ is a constant. In particular $\mathcal{G}(s) \ll T^\epsilon$ for any $\epsilon >0$ and $T$ sufficiently large.
\end{lemma}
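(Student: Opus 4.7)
The plan is to take logarithms of the Euler product and estimate each term. Write $\mathcal{G}(s) = \prod_{p>2}(1+z_p)$ with $z_p := \frac{2}{(p-2)(p^{s+1}+1)}$. Expanding $|p^{s+1}+1|^2 = p^{2(\sigma+1)}+2p^{\sigma+1}\cos(t\log p)+1$ and minimizing over $t$ yields the uniform inequality $|p^{s+1}+1| \ge p^{\sigma+1}-1 \ge (\sigma+1)\log p$ for $\sigma>-1$ (using $e^x-1 \ge x$), and hence
\[
|z_p| \le \frac{2}{(p-2)(\sigma+1)\log p}.
\]
All upper bounds on $|\mathcal{G}(s)|$ then flow from $\log|\mathcal{G}(s)| = \sum_p \log|1+z_p| \le \sum_p\log(1+|z_p|)$.

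For part (i), with $\sigma \ge -1+\delta$, the bound gives $|z_p| \ll_\delta p^{-1-\delta}/\log p$, so $\sum_p |z_p|$ converges uniformly and $|\mathcal{G}(s)| \ll_\delta 1$. A matching lower bound follows from $|1+z_p| \ge 1-|z_p|$ (which is positive for all but finitely many primes) together with direct estimation of the remaining initial factors, away from the isolated zeros of individual Euler factors.

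For part (ii), with $\sigma \ge b = -1+a/\log T$, the universal bound reads $|z_p| \le 2\log T/(a(p-2)\log p)$, which can exceed $1$ for small primes. I split the primes at $P_0 := \log T/\log\log T$, chosen so that $|z_{P_0}|$ is of order $1$. For $p \le P_0$, use $\log(1+|z_p|) \ll \log\log T$ (since $|z_p| \ll \log T$ trivially); combined with $\pi(P_0) \ll \log T/(\log\log T)^2$, this contributes $\ll \log T/\log\log T$. For $p > P_0$, $|z_p| \le 1$, so $\log(1+|z_p|) \le |z_p| \ll \log T/(p\log p)$; a partial-summation argument gives $\sum_{p>P_0} 1/(p\log p) \ll 1/\log P_0 \asymp 1/\log\log T$, yielding a contribution $\ll \log T/\log\log T$. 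Adding the two yields $\log|\mathcal{G}(s)| \ll \log T/\log\log T$, which is \eqref{calGupper}. The consequence $\mathcal{G}(s) \ll T^\epsilon$ is then immediate.

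The main obstacle is that the worst-case inequality $|p^{s+1}+1| \ge (\sigma+1)\log p$ is attained at a different value of $t$ for each $p$, so using it uniformly for all primes is pessimistic; a more delicate analysis would presumably yield $\mathcal{G}(s) \ll (\log T)^C$. Nevertheless, the stated bound at level $T^{O(1/\log\log T)}$ already suffices for the applications in the proof of Theorem~\ref{thm1}, and the split at $P_0 \asymp \log T/\log\log T$ is chosen precisely to balance the two halves of the sum.
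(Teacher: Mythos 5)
Your proof is correct for the upper bounds and follows essentially the same route as the paper: the triangle inequality $|p^{s+1}+1|\ge p^{\sigma+1}-1\ge(\sigma+1)\log p$, uniform convergence of $\sum_p|z_p|$ for \eqref{calG bounded}, and a split of the Euler product at $p\asymp \log T/\log\log T$, handling the small primes via $\pi(Y)\ll \log T/(\log\log T)^2$ and the large ones via $\log(1+x)\le x$ plus partial summation. Your hedge ``away from the isolated zeros of individual Euler factors'' on the lower bound in \eqref{calG bounded} is in fact warranted --- the $p=3$ factor of $\mathcal{G}$ vanishes at $s=i\pi/\log 3$, so the two-sided estimate fails at isolated points of the strip; this is a flaw shared by the paper's own proof, and it is harmless because the lower bound is only ever invoked on the line $\sigma=-\tfrac34$, where no factor vanishes.
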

We prove this lemma after completing the proof of Theorem 1.

Consider first the horizontal line segments $\sigma \pm iT^*$ with $b\le \sigma \le 2$. Applying the estimates above we have 
\[\begin{split} \mathcal{F}(\sigma \pm i T^*) &\ll \frac{{T}^{ \mu(\sigma)+\mu(\sigma+1)+ 4\epsilon}}{{T}^{m+1}}x^{\sigma + m} \\&
\ll \frac{{T}^{ \mu(-1)+\mu(0)+ 4\epsilon}}{{T}^{m+1}}x^{ m+2}
\\ &
\ll {T}^{1-m +4\epsilon}x^{m+2}\\&
\ll 1 \end{split} \]
when $T \ge x^5$ since $m\ge 2$. Hence the contribution from these horizontal contours is $\ll 1$.

Next, on the vertical contour with $\sigma = b +it$, $|t|\le T^*$ we have 
\begin{equation} \label{Fvert} \mathcal{F}(b + i t) \ll (|t|+3)^{\frac12 -m}T^\epsilon x^{m+b}, \end{equation}
since for the range
$1\le |t|\le T^*$ we have
\[\begin{split} \mathcal{F}(b + i t) &\ll \frac{(|t|+3)^{ \mu(b) +\mu(b+1) + 2b+ \frac32 +3\epsilon}}{(|t|+3)^{m+1}}T^\epsilon x^{m+b} \\ &
\ll (|t|+3)^{\mu(-1) +\mu(0)-\frac32-m+3\epsilon }T^\epsilon x^{m+b} \\ &
\ll (|t|+3)^{\frac12-m+3\epsilon}T^\epsilon x^{m+b}, \end{split} \]
while for $|t|\le 1$ we have
$$
\mathcal{F}(b+it)
\ll \frac{x^{m+b}}{|b+1+it|}\left|\mathcal{G}(b+it)\right|
\ll \frac{x^{m+b}}{|b+1+it|}T^\epsilon \ll 
x^{m+b}T^{\epsilon}\log T\ll x^{m+b}T^{2\epsilon},
$$
where we applied Lemma \ref{g_bound} in the second inequality.
 
Taking $T=x^5$ and integrating \eqref{Fvert} over $|t|\le T^*$ the contribution from this vertical contour is $\ll x^{m-1+\epsilon}$.
\end{proof}

\begin{proof}[Proof of Lemma \ref{g_bound}]
For $-1+\delta \le \sigma \le 2$ we have
\[ \prod_{p>2}\left( 1- \frac{2}{p^{1+\delta}}\right)\ll |\mathcal{G}(s)|\ll \prod_{p>2}\left( 1+ \frac{2}{p^{1+\delta}}\right),\]
and \eqref{calG bounded} follows because the sums here converge for any fixed $\delta>0$.
For $ b \le \sigma \le 2$ we have
\[ |\mathcal{G}(s)| \le \prod_{p>2}\left( 1+ \frac{2}{(p-2)(p^{b +1}-1)}\right).\]
Now
\[ p^{b+1}- 1 =\int_0^{b+1} p^u\log p \, du \ge (b+1)\log p = \frac{a\log p}{\log T}, \]
and therefore
\[ |\mathcal{G}(s)| \le \prod_{p>2}\left( 1+ \frac{2\log T}{a(p-2)\log p}\right).\]
The terms in the product with $p\le Y :=\frac4a \frac{\log T}{\log\log T}$ make a contribution
\[ \ll \prod_{p\le Y} \frac{2}{a} \log T = (\frac{2}{a} \log T)^{\pi(Y)} \ll \exp(C_1 \frac{\log T}{\log\log T}) .\]
Using the inequality $\log(1+x) \le x$ for $0\le x <1$, the terms in the product with $p>Y$ make a contribution
\[\begin{split} & = \exp\left( \sum_{p>Y} \log\left( 1+ \frac{2\log T}{a(p-2)\log p}\right) \right) \\& \leq \exp\left( \sum_{p>Y} \frac{2\log T}{a(p-2)\log p} \right) \\
&\ll \exp\left(C_2 \log T \left(\int_{Y}^\infty \frac{du}{u\log^2 u} + \frac1{\log^2Y}\right)\right) \\
&\ll \exp\left(\frac{C_2 \log T}{\log Y}\right) \\
&\ll \exp\left(\frac{2C_2 \log T}{\log \log T}\right).
\end{split}\]
Multiplying these contributions prove \eqref{calGupper} with $B=C_1+2C_2$.
\end{proof}

\section{Proof of Theorem \ref{thm2}}

Starting with \eqref{truncated}, we use the same rectangle as before but now take $b=b_1$, a fixed number with $ -\frac34 <b_1< -\frac12$. By the Riemann Hypothesis $\rho/2-1 = -\frac34 +i\gamma/2$, and therefore none of the singularities of $1/\zeta(2s+2)$ are in this rectangle. Hence \eqref{1asym} becomes 
\begin{equation}\label{RH1asym} S_m(x) = R_{\mathcal{F}}(1) + R_{\mathcal{F}}(0) +\frac{1}{2\pi i} \left( \int_{2-iT^*}^{b_1-iT^*} + \int_{b_1-iT^*}^{b_1+iT^*} +\int_{b_1+iT^*}^{2+iT^*} \right) \mathcal{F}(s) \, ds +O(1). \end{equation}
As before when $T\ge x^5$ and $m\ge 2$ the integrals along the horizontal contours are $O(1)$. On the Riemann Hypothesis, we have
$$ \frac{1}{\zeta(\sigma+it)} \ll (|t|+3)^\epsilon,
\qquad \text{for every}~\sigma>1/2, $$
see \cite[Eq. (14.2.6)]{Titchmarsh} and we may replace the bound for $\mu(\sigma)$ in \eqref{mu} by
\begin{equation} \label{RHmu} \mu(\sigma) \le \max( \frac12 -\sigma, 0).\end{equation}
Therefore on the vertical contour with $\sigma = b_1 +it$, $|t|\le T^*$, \eqref{Fvert} becomes
\begin{equation} \label{RHFvert} \mathcal{F}(b_1 + i t) \ll (|t|+3)^{-2b_1- m-1}T^\epsilon x^{m+b_1}. \end{equation}
On integrating we obtain on taking $T=x^5$ that the contribution from this vertical contour is $\ll x^{m +b_1+\epsilon}$. Since $b_1$ can be taken as close to $-\frac34$ as we wish, this proves Theorem 2.

\section{Proof of Theorem \ref{thm3}}

We first prove the following lemma.
\begin{lemma}[Ng, 2004] \label{Gammasum-lemma} Assuming the Riemann Hypothesis, the simplicity of all zeros, and the Gonek-Hejhal Conjecture, then the sums
\begin{equation} T_1(b) := \sum_{\gamma} \frac{1}{|\gamma|^b|\zeta'(\rho)|}\qquad \text{and}\qquad T_2(b):= \sum_{\gamma} \frac{1}{|\gamma|^b|\zeta'(\rho)|^2} \end{equation}
both converge for any fixed number $b>1$. Assuming instead the Weak Mertens Conjecture these sums converge for any $b\ge 2$.
\end{lemma}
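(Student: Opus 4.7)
The plan is to handle $T_2(b)$ first using partial summation against the counting function weighted by $|\zeta'(\rho)|^{-2}$, and then to deduce convergence of $T_1(b)$ by a Cauchy--Schwarz split. Throughout I will use the symmetry $\rho\leftrightarrow\overline\rho$ to restrict to zeros with $\gamma>0$ up to a factor of $2$, and I will use the classical density bound $N(T)\ll T\log T$ which ensures that the tail contribution $\sum_{|\gamma|\le 1}$ is a finite (and under the hypotheses harmless) quantity since there are only finitely many low-lying zeros.

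First, set
\[ A(T) := \sum_{0<\gamma\le T} \frac{1}{|\zeta'(\rho)|^2}. \]
The Gonek--Hejhal Conjecture is exactly the statement $A(T)\ll T$. By partial summation,
\[ \sum_{0<\gamma\le X} \frac{1}{\gamma^b|\zeta'(\rho)|^2} = \frac{A(X)}{X^b} + b\int_1^X \frac{A(T)}{T^{b+1}}\,dT, \]
and for any fixed $b>1$ the boundary term tends to $0$ while the integral converges. Thus $T_2(b)\ll 1$ for $b>1$ under Gonek--Hejhal. For $T_1(b)$, write
\[ T_1(b) = \sum_\gamma \frac{1}{|\gamma|^{b/2}|\zeta'(\rho)|}\cdot\frac{1}{|\gamma|^{b/2}} \le T_2(b)^{1/2}\!\left(\sum_\gamma \frac{1}{|\gamma|^b}\right)^{\!1/2} \]
by Cauchy--Schwarz. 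The second factor converges for $b>1$ by partial summation against $N(T)\ll T\log T$, so $T_1(b)$ also converges for $b>1$.

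For the Weak Mertens Conjecture version, the key input is \eqref{WMCsum}, namely $\sum_\rho |\rho\zeta'(\rho)|^{-2}\ll 1$, which gives directly
\[ \sum_\gamma \frac{1}{|\gamma|^2|\zeta'(\rho)|^2} \ll 1, \]
after absorbing the finitely many low-lying zeros. Hence $T_2(2)\ll 1$, and for any $b\ge 2$ we have $|\gamma|^{-b}\le |\gamma|^{-2}$ once $|\gamma|\ge 1$, giving $T_2(b)\ll 1$. A second application of Cauchy--Schwarz exactly as above then yields $T_1(b)\ll 1$ for $b\ge 2$.

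The argument is essentially routine once the correct weighted counting function is identified, so I do not expect a serious obstacle; the only subtlety is to verify the partial-summation boundary term vanishes in the limit (which fails precisely at $b=1$, explaining the strict inequality $b>1$ in the Gonek--Hejhal case), and to note that the $b=2$ threshold in the Weak Mertens case is tight because \eqref{WMCsum} carries the factor $|\rho|^{-2}$ rather than $|\rho|^{-1}$.
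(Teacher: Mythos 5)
Your proposal is correct and follows essentially the same route as the paper: partial summation of $A(T)=\sum_{0<\gamma\le T}|\zeta'(\rho)|^{-2}\ll T$ for $T_2(b)$, Cauchy--Schwarz against $\sum_\gamma|\gamma|^{-b}$ for $T_1(b)$, and \eqref{WMCsum} for the Weak Mertens case. The only cosmetic difference is that you justify the convergence of $\sum_\gamma|\gamma|^{-b}$ via $N(T)\ll T\log T$ while the paper uses Ingham's bound $\sum_{0<\gamma\le T}\gamma^{-1}\ll\log^2 T$; both are standard and equivalent for this purpose.
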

\noindent This lemma is partially contained in \cite[Lemma 1]{Ng2004}.

\begin{proof}[Proof of Lemma \ref{Gammasum-lemma}]
By the well-known unconditional estimate \cite[Theorem 25b]{Ingham1932}
\[U(T) := \sum_{0<\gamma \le T}\frac1{\gamma} \ll \log^2 T,\]
we have $ \sum_\gamma \frac{1}{|\gamma|^b} $
converges for $b>1$ since
\[\begin{split} \sum_{0<\gamma\le T} \frac{1}{\gamma^b} &= \int_1^T \frac{1}{t^{b-1}}\, dU(t) \\ &
= \frac{U(T)}{T^{b-1}} + (b-1)\int_1^T\frac{U(t)}{t^b}\, dt \\&
\ll 1 .\end{split}\]
Hence by Cauchy's inequality 
\[ T_1(b) \le \sqrt{T_2(b)\sum_{\gamma}\frac{1}{|\gamma|^b}}, \]
and therefore when $b>1$ the convergence of $T_1(b)$ follows from the convergence of $T_2(b)$. By \eqref{WMCsum}
the Weak Mertens Conjecture immediately shows $T_2(b)$ converges for $b\ge 2$. Denoting
\[ V(T) := \sum_{0<\gamma \le T} \frac{1}{|\zeta'(\rho)|^2},\]
then for $b>1$
\[\begin{split} T_2(b) &= \int_1^\infty \frac{1}{t^{b}}\, dV(t) \\
&= b\int_1^\infty\frac{V(t)}{t^{b+1}}\, dt \\
&\ll 1 ,\end{split}\]
since by the Gonek-Hejhal Conjecture $V(t) \ll t$.
\end{proof}

\begin{proof}[Proof of Theorem \ref{thm3}]
By \eqref{calG bounded} and the Riemann Hypothesis bounds used to obtain \eqref{RHFvert} we have
\[ |a(\rho)| \ll \frac{1}{|\gamma|^{m-\frac12 -\epsilon}|
\zeta'(\rho)|}. \]
Here, assuming the simplicity of zeros, $a(\rho)$ is as defined in \eqref{a_rho}.
Hence by Lemma \ref{Gammasum-lemma} we have that $\sum_\gamma a(\rho)$ is absolutely convergent for $m\ge 2$ on the Gonek-Hejhal Conjecture and for $m\ge 3$ on the Weak Mertens Conjecture. To prove \eqref{EmExplicit}, by \eqref{Em(x)} we know that there exists a number $U$ satisfying $x^5\le U\le 2x^5$ for which, assuming the Riemann Hypothesis,
\[ \begin{split}
E_m(x) 
&= x^{m-\frac34} \sum_{|\gamma|\le U} a(\rho) x^{i\frac{\gamma}{2}} + O(x^{m-1+\epsilon}) \\
&= x^{m-\frac34} \sum_{\gamma} a(\rho) x^{i\frac{\gamma}{2}}
- x^{m-\frac34} \sum_{|\gamma|> U} a(\rho) x^{i\frac{\gamma}{2}}
+ O(x^{m-1+\epsilon}). \end{split}
\]
By Lemma \ref{Gammasum-lemma}, 
\begin{align*}
\sum_{|\gamma|> U} a(\rho) x^{i\frac{\gamma}{2}}
\ll \sum_{|\gamma|> U} \frac1{|\gamma|^{m-\frac12-\epsilon}|\zeta'(\rho)|}
\le \frac{1}{U^{\frac15}}\sum_{|\gamma|>U} \frac1{|\gamma|^{m-\frac34}|\zeta'(\rho)|}\le\frac{1}{U^{\frac15}}\sum_{|\gamma|} \frac1{|\gamma|^{m-\frac34}|\zeta'(\rho)|}\ll \frac{1}{U^{\frac15}}
\ll \frac1{x},
\end{align*}
for $m\ge 2$ on the Gonek-Hejhal conjecture and for $m\ge 3$ on the Weak Mertens Conjecture.
Thus we obtain \eqref{EmExplicit}, and equations \eqref{EmBigO} and \eqref{Embound} are immediate consequences of \eqref{EmExplicit}. 
\end{proof}

\section{Proof of Theorems \ref{thm4} and \ref{thm5}}

It is well-known for the Mertens problem that the existence of a zero off the half-line or of a zero with multiplicity $m_\rho \ge 2$ creates large oscillations in the error term.
Thus if $\rho = \Theta+i \gamma $ is a zero of multiplicity $m_\rho\ge 1$, then 
\[ M(x) = \Omega_\pm \left( x^{\Theta}\log^{m_\rho -1}x\right), \]
see \cite[p. 467]{MontgomeryVaughan2007}.
The same method implies for $E_m(x)$ that for $m\ge 1$
\begin{equation} \label{Theta} E_m(x) = \Omega_\pm \left( x^{m + \frac{\Theta}{2} -1}\log^{m_\rho -1}x\right), \end{equation}
and therefore in particular if the Riemann Hypothesis is false or if there is a zero that is not simple then \eqref{thm5eq1} follows. Thus we are left to consider the case where the Riemann Hypothesis is true and all the zeros are simple, which we henceforth assume. 

Our proof is a simple application of a theorem of Ingham \cite{Ingham1942}. Ingham stated his result in terms of Laplace transforms by making the change of variables $x=e^u$ in the Mellin transform; the form we give below is for the Mellin transform in the form given in \cite{AndersonStark1980}, see also \cite[Theorem 11.12]{BatemanDiamond2004}. Note we are considering here Mellin transforms that have a sequence of simple poles going to infinity on the line $s=\sigma_0$ symmetrically above and below the real axis.

\begin{proposition}[Ingham] Let $A(x)$ be a bounded Riemann integrable function in any finite interval $1\le x\le X$, and suppose that
\begin{equation}\label{MellinA} \widehat{A}(s) := \int_1^\infty \frac{A(x)}{ x^{s+1}} \, dx \end{equation}
converge for $\sigma >\sigma_1$ and $\widehat{A}(s)$ has an analytic continuation to $\sigma > \sigma_0$. Further, suppose we have a sequence of real numbers $0<\check \gamma_1 < \check \gamma_2 <\check \gamma_3 < \ldots $, with $\check \gamma_n \to \infty$, and define $\check \gamma_0 =0$ and $\check \gamma_{-n} = -\check \gamma_n$. Also define a corresponding sequence of complex numbers $r_n\neq 0$, $r_{-n} = \overline{r}_n$, and $r_0$ real and allowed to be zero. By \eqref{MellinA} we obtain the Mellin transform pair, for any $T>0$, 
\begin{equation} S_T(x) = x^{\sigma_0}\sum_{|\check \gamma_n|\le T} r_n x^{i\check \gamma_n}, \qquad \widehat{S}_T(s) = \sum_{|\check \gamma_n|\le T}\frac{ r_n}{s-(\sigma_0+i\check \gamma_n)}. \end{equation}
If  $\widehat{A}(s)-\widehat{S}_T(s)$ can be analytically continued to the region $\sigma\ge \sigma_0$, $-T\le t\le T$, for some $T>0$,  then we have
\begin{equation} \liminf_{x\to \infty}\frac{A(x)}{x^{\sigma_0}}\le S_T^*(x_0) \le \limsup_{x\to \infty}\frac{A(x)}{x^{\sigma_0}}, \end{equation}
for any real number $x_0$, where
\begin{equation} S_T^*(x) = \sum_{|\check \gamma_n|\le T}\left(1- \frac{|\check\gamma_n|}{ T}\right) r_n x^{i\check \gamma_n} = r_0+ 2\, {\rm Re}\sum_{0<\check \gamma_n\le T}\left(1- \frac{\check\gamma_n}{ T}\right) r_n x^{i\check \gamma_n}. \end{equation}
\end{proposition}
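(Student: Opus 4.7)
The approach is a Fourier--Tauberian smoothing argument followed by an almost-periodicity transfer. Pass to the logarithmic variable by setting $G(u) := A(e^u) e^{-\sigma_0 u}$ for $u \geq 0$, extended by $0$ for $u < 0$, so that $\ell := \liminf_{u\to\infty} G(u)$ and $L := \limsup_{u\to\infty} G(u)$ are precisely the quantities appearing in the conclusion; we may assume both are finite, as otherwise the inequality is vacuous. The plan is to convolve $G$ against a non-negative band-limited kernel, identify the smoothed value with $S_T^*(e^U)$ up to a Riemann--Lebesgue error, and then push the resulting bound to every $x_0$ using almost-periodicity of $S_T^*$.

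Fix the Fej\'er kernel $K_T(v) = (1 - \cos Tv)/(\pi T v^2/2)$, which satisfies $K_T \ge 0$, $\int K_T = 1$, $\hat K_T(\gamma) = (1 - |\gamma|/T)_+$ supported in $[-T,T]$, and $K_T(v) = O(1/(T v^2))$. Consider the convolution
$$J(U) := \int_{-\infty}^\infty G(u)\, K_T(U-u)\, du.$$
Since $K_T$ has mass $1$ concentrated near the origin (its tail outside $[-R,R]$ has $L^1$-mass $O(1/(TR))$) and $G$ lies within $\epsilon$ of $[\ell, L]$ once $u$ is sufficiently large, a standard near/far splitting shows $J(U) \in [\ell - \epsilon, L + \epsilon]$ for every $\epsilon > 0$ and all sufficiently large $U$.

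Next, decompose $G = P + R$, where $P(u) := \sum_{|\check\gamma_n| \le T} r_n e^{i\check\gamma_n u}$ is the finite trigonometric polynomial whose one-sided Laplace transform reproduces $\widehat S_T$. A direct change of variable gives
$$(P * K_T)(U) = \sum_{|\check\gamma_n|\le T} r_n\, \hat K_T(\check\gamma_n)\, e^{i\check\gamma_n U} = S_T^*(e^U).$$
The Tauberian step is the claim $(R * K_T)(U) = o(1)$ as $U \to \infty$. For $u \ge 0$ the one-sided Laplace transform of $R$ equals $H(\sigma_0 + z) := \widehat A(\sigma_0 + z) - \widehat S_T(\sigma_0 + z)$, which by hypothesis is analytic on the closed half-strip $\{\operatorname{Re} z \ge 0,\ |\operatorname{Im} z|\le T\}$, and hence continuous and bounded on the segment $\operatorname{Re} z = 0$, $|\operatorname{Im} z| \le T$. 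Shifting the Mellin contour onto this segment (justified by truncating in $u$ and passing to a limit) and then invoking Plancherel gives
$$\int_0^\infty R(u)\, K_T(U-u)\, du = \frac{1}{2\pi} \int_{-T}^{T} \hat K_T(\gamma)\, H(\sigma_0 + i\gamma)\, e^{i\gamma U}\, d\gamma,$$
which tends to $0$ by the Riemann--Lebesgue lemma. The remaining piece $\int_{-\infty}^0 R(u)\, K_T(U-u)\, du$ equals $-\int_{-\infty}^0 P(u)\, K_T(U-u)\, du$ and is $O(1/(TU))$ by the tail bound for $K_T$. Hence $J(U) = S_T^*(e^U) + o(1)$.

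Combining the two descriptions of $J(U)$ yields $S_T^*(e^U) \in [\ell - 2\epsilon, L + 2\epsilon]$ for all sufficiently large $U$. Because only finitely many $\check\gamma_n$ satisfy $|\check\gamma_n| \le T$, the function $U \mapsto S_T^*(e^U)$ is a Bohr almost-periodic trigonometric polynomial, whose $\epsilon$-translation numbers are relatively dense; in particular, for any fixed $U_0 = \log x_0$, the value $S_T^*(e^{U_0})$ is a limit point of $\{S_T^*(e^U) : U \ge U_1\}$ for every $U_1$. Hence $S_T^*(x_0) \in [\ell - 2\epsilon, L + 2\epsilon]$, and letting $\epsilon \to 0$ gives the conclusion. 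The main obstacle is the rigorous justification of the Tauberian identity, whose careful handling of boundary values of the Laplace transform is the heart of Ingham's 1942 argument.
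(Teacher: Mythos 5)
The paper does not prove this Proposition: it is quoted as Ingham's theorem, with the proof deferred to Ingham (1942), Anderson--Stark, and Bateman--Diamond (Theorem 11.12). Your argument is, in substance, exactly the proof found in those sources: pass to $u=\log x$, convolve $G(u)=A(e^u)e^{-\sigma_0 u}$ with the Fej\'er kernel so that the Fourier side is supported in $[-T,T]$, use the analytic continuation of $\widehat A-\widehat S_T$ up to the closed segment $\sigma=\sigma_0$, $|t|\le T$ together with Riemann--Lebesgue to show the smoothed difference tends to $0$, identify the surviving piece with $S_T^*(e^U)$, and transfer from large $U$ to arbitrary $x_0$ by almost periodicity of the finite exponential sum. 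The overall structure, the identification of $r_n\mapsto r_n(1-|\check\gamma_n|/T)$ with the Fej\'er multiplier, the treatment of the $u<0$ tail via $R=-P$ there, and the final Kronecker/Bohr step are all correct and are the right ingredients.

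Three caveats. First, a harmless normalization slip: your explicit kernel $(1-\cos Tv)/(\pi Tv^2/2)$ integrates to $2$, not $1$; the correctly normalized Fej\'er kernel is $(1-\cos Tv)/(\pi Tv^2)$, and it is this one whose transform is $(1-|\gamma|/T)_+$. Since you only ever use the (correct) properties $\int K_T=1$ and $\widehat K_T=(1-|\gamma|/T)_+$, nothing downstream is affected. Second, the reduction ``we may assume $\ell$ and $L$ are both finite, else the inequality is vacuous'' is not quite right: if, say, $\ell$ is finite but $L=+\infty$, the lower inequality $\ell\le S_T^*(x_0)$ is still a nontrivial claim, and in that case $G$ need not be bounded, so the convolution $J(U)$ and the dominated-convergence steps in your Tauberian identity are not obviously well defined. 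Handling the one-sidedly unbounded case requires keeping the damping factor $e^{-\delta u}$ until the end and using positivity of $K_T$ (this is done in Ingham's paper); in the present application this gap is immaterial because the only way the hypothesis fails is in the direction that makes the corresponding inequality automatic. Third, the contour-shift/Plancherel step is only sketched; under the standing boundedness assumption it is routine (Fubini for $\delta>0$, then uniform continuity of $H$ on the compact boundary segment and dominated convergence as $\delta\to0^+$), and you correctly identify it as the technical heart of the argument, but a self-contained proof would need those few lines written out.
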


Notice 
\[x^{\sigma_0}S_T^*(x) = \frac{1}{T}\int_0^T S_t(x)\, dt \]
has smoothed out some of the variation in $S_T(x)$, but in applications where we take $T\to \infty$ there is no loss in using $S_T^*(x)$ in place of $S_T(x)$. In numerical work however where $T$ is fixed, there is a better choice than $S_T^*(x)$, see \cite{OdlyzkoteRiele} and \cite{BestTrudgian2015}.

To apply this Proposition to $E_m(x)$, we note by integration by parts that for $\sigma >1$
\[ \int_1^\infty \frac{S_m(x)}{x^{s+m+1}}\, dx = \sum_{k=1}^\infty \mathfrak{S}(k) \int_k^\infty \frac{(x-k)^m}{x^{s+m+1}}\, dx = \frac{m!}{s(s+1)(s+2)\cdots (s+m)}F(s), \]
which may also be obtained from \eqref{Scontour} and \eqref{calF} with the Mellin inversion formula. On substituting \eqref{formula1} into this equation we obtain
\[ \int_1^\infty \frac{E_m(x)}{x^{s+m+1}}\, dx = \frac{m!}{s(s+1)(s+2)\cdots (s+m)}F(s) - \frac{\frac{1}{m+1}}{s-1} + \frac{\frac12}{s^2}-\frac{\frac12(H_m-\gamma-\log 2\pi)}{s}.\]
The case $m=1$ was used in \cite{GS2020}. In the Proposition, we take 
\[ A(x) = \frac{E_m(x)}{x^{m}}, \qquad \widehat{A}(s) = \frac{m!}{s(s+1)(s+2)\cdots (s+m)}F(s) - \frac{\frac{1}{m+1}}{s-1} + \frac{\frac12}{s^2}-\frac{\frac12(H_m-\gamma-\log 2\pi)}{s},\]
and see from Lemmas \ref{F(s)lem} and \ref{lemma2.2} that $\widehat{A}(s)$ is analytic for $\sigma > -\frac34$ and meromorphic for $\sigma > -1$ with simple poles at $s=\frac{\rho}{2} -1= -\frac34 + i\frac{\gamma}{2}$, where $\rho$ are the complex zeros of $\zeta(s)$, and the residues at these simple poles are $a(\rho)$. Thus in the Proposition we have $\sigma_0=-\frac34$, $\check \gamma_n = \frac{\gamma_n}{2}$, $r_0=0$, and $r_n = a(\rho_n)$. Therefore we conclude that for any number $x_0$
\begin{equation} \label{lasteq} \liminf_{x\to\infty}\frac{E_m(x)}{x^{m-\frac34}}\le 2\ {\rm Re}\ \sum_{0<\gamma_n\le 2T}\left(1-\frac{\gamma_n}{2T}\right) a(\rho_n)(x_0)^{i\frac{\gamma_n}{2}} \le \limsup_{x\to \infty}\frac{E_m(x)}{x^{m-\frac34}}. \end{equation}

\begin{proof}[Proof of Theorem 4] Since $\gamma_1 = 14.134725\ldots$ and 
$\gamma_2 =21.022039\ldots$, we choose $T=10$ in \eqref{lasteq}, and obtain
\[ \liminf_{x\to\infty}\frac{E_m(x)}{x^{m-\frac34}}\le 2 \left(1-\frac{\gamma_1}{20}\right){\rm Re}\left( a(\rho_1)(x_0)^{i\frac{\gamma_1}{2}}\right) \le \limsup_{x\to \infty}\frac{E_m(x)}{x^{m-\frac34}}.\]
We can clearly find values for $x_0$ so that
\[ a(\rho_1)(x_0)^{i\frac{\gamma_1}{2}} = |a(\rho_1)|e^{i( \arg(a(\rho_1)+\frac{\gamma_1}{2}\log(x_0))} = \pm|a(\rho_1)|,\]
and Theorem \ref{thm4} follows. 
\end{proof}

\begin{proof}[Proof of Theorem 5] We will prove the limsup parts of theorem since the liminf is handled the same way. Pick an $\epsilon>0$. If the imaginary parts of the zeros are linearly independent over the integers, by Kronecker's theorem we can find values of $x_0=x_0(T)$ so that 
\begin{equation} \label{nextlast}\limsup_{x\to \infty}\frac{E_m(x)}{x^{m-\frac34}}\ge 2\ {\rm Re} \sum_{0<\gamma_n\le 2T}\left(1-\frac{\gamma_n}{2T}\right) a(\rho_n)(x_0)^{i\frac{\gamma_n}{2}} > 2(1-\epsilon) \sum_{0<\gamma_n\le 2T}\left(1-\frac{\gamma_n}{2T}\right) |a(\rho_n)|.\end{equation}

\noindent{\it Case 1.} Suppose $\sum_\gamma |a(\rho)|$ diverges. Then \eqref{thm5eq1} holds since
\[ \limsup_{x\to \infty}\frac{E_m(x)}{x^{m-\frac34}} \gg \sum_{0<\gamma_n\le T} |a(\rho_n)| \to \infty, \qquad \text{as} \ \ T\to \infty.\]

\noindent{\it Case 2.} Suppose $c_m=\sum_\gamma |a(\rho)|$ converges. 
With the $\epsilon$ in \eqref{nextlast}, we have on taking $T$ sufficiently large
\[\begin{split} \limsup_{x\to \infty}\frac{E_m(x)}{x^{m-\frac34}} & > 2(1-\epsilon) \sum_{0<\gamma_n\le 2\epsilon T}\left(1-\frac{\gamma_n}{2T}\right)|a(\rho_n)| \\ & > 2(1-\epsilon)^2\sum_{0<\gamma_n\le 2\epsilon T}|a(\rho_n)|\\ &>(1-\epsilon)^3c_m .\end{split}\]
and we conclude $\limsup_{x\to \infty}\frac{E_m(x)}{x^{m-\frac34}}\ge c_m$ and \eqref{thm5eq2} follows.

It remains to prove that $\sum_\gamma|a(\rho)|$ diverges when $m=1$. 
By \eqref{FE} and \eqref{calG bounded} we have
\[\begin{split} |a(\rho)| &\asymp \frac{ |\zeta(\frac{\rho}{2}-1)|\zeta(\frac{\rho}{2})|}{|\rho^2 \zeta'(\rho)|}
\\& \asymp \frac{ |\zeta(2-\frac{\rho}{2})|\zeta(1-\frac{\rho}{2})|}{|\rho^{\frac12} \zeta'(\rho)|} \\&
\gg \frac{1}{|\rho^{\frac12 +\epsilon}\zeta'(\rho)|},
\end{split}\]
since assuming the Riemann Hypothesis $\left|\frac{1}{\zeta(s)}\right| =O( (|t|+3)^\epsilon)$ for $\sigma > \frac12$, see \cite[Eq. (14.2.6)]{Titchmarsh}.
By Theorem 15.6 of \cite{MontgomeryVaughan2007}, the Riemann Hypothesis implies that
\[ \sum_{0<\gamma\le T} \frac{1}{|\zeta'(\rho)|} \gg T, \]
and the divergence follows from
\[ \sum_{0< \gamma \le T}|a(\rho)| \gg \sum_{0< \gamma \le T} \frac{1}{|\gamma^{\frac12 +\epsilon}\zeta'(\rho)|}\gg \frac{1}{T^{\frac12+\epsilon}} \sum_{0<\gamma\le T} \frac{1}{|\zeta'(\rho)|} \gg T^{\frac12-\epsilon}.\]

\end{proof}


\end{document}